\documentclass[12pt]{amsart}

\setlength{\oddsidemargin}{0cm}
\setlength{\evensidemargin}{0cm}
\setlength{\textwidth}{16.5cm}

\usepackage{amsfonts,amsmath}
\newtheorem{thm}{Theorem}[section]
\newtheorem{lemma}[thm]{Lemma}
\newtheorem{prop}[thm]{Proposition}
\newtheorem{cor}[thm]{Corollary}

\newcommand{\zz}{\begin{equation}}
\newcommand{\z}{\end{equation}}

\newcommand{\E}{\epsilon}
\newcommand{\IE}{\mathbb{E}}
\newcommand{\IP}{\mathbb{P}}
\newcommand{\R}{\mathbb{R}}
\newcommand{\Z}{\mathbb{Z}}

\begin{document}

\title{Uniqueness of solutions of stochastic differential equations}

\author{A. M. Davie}

\address{School of Mathematics, University of Edinburgh, King's Buildings,
Mayfield Road, Edinburgh EH9 3JZ, UK}

\subjclass[2000]{Primary 60H10; Secondary 34F05}



\begin{abstract}
We consider the stochastic differential equation
\[dx(t)=dW(t)+f(t,x(t))dt,\ \ \ \ \ \ \ \ \ \ x(0)=x_0\]
for $t\geq0$, where $x(t)\in\R^d$, $W$ is a standard $d$-dimensional
Brownian motion, and $f$ is a bounded Borel function from $[0,\infty)
\times\R^d\rightarrow\R^d$ to $\R^d$. We show that, for almost all Brownian
paths $W(t)$, there is a unique $x(t)$ satisfying this equation.
\end{abstract}

\maketitle

\section{Introduction}\label{int}

In this paper we consider the stochastic differential equation
\[dx(t)=dW(t)+f(t,x(t))dt,\ \ \ \ \ \ \ \ \ \ x(0)=x_0\]
for $t\geq0$, where $x(t)\in\R^d$, $W$ is a standard $d$-dimensional
Brownian motion, and $f$ is a bounded Borel function from $[0,\infty)
\times\R^d\rightarrow\R^d$ to $\R^d$. Without loss of generality we
suppose $x_0=0$ and then we can write the equation as
\zz\label{eq1} x(t)=W(t)+\int_0^tf(s,x(s))ds,\ \ \ \ \ t\geq0\z
It follows from a theorem of Veretennikov \cite{ver} that (\ref{eq1}) has a
unique strong solution, i.e. there is a unique process $x(t)$, adapted to
the filtration of the Brownian motion, satisfying (\ref{eq1}). Veretennikov
in fact proved this for a more general equation. Here we consider a different
question, posed by N. V. Krylov \cite{ig}: we choose a Brownian path $W$ and
ask whether (\ref{eq1}) has a unique solution for that particular path. The
main result of this paper is the following affirmative answer:

\begin{thm}\label{mth} For almost every Brownian path $W$, there is a unique
continuous $x:[0,\infty)\rightarrow\R^d$ satisfying (\ref{eq1}).
\end{thm}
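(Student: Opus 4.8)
The plan is to prove uniqueness directly, fixing the exceptional null set of Brownian paths once and for all; existence is then automatic, since the strong solution furnished by Veretennikov's theorem satisfies (\ref{eq1}) for almost every $W$. So suppose that for a given $W$ there are two solutions $x_1,x_2$; put $u=x_1-x_2$ and $y_i(t)=x_i(t)-W(t)$, so that $y_1,y_2$ are $C^1$ with $|y_i'|\le M:=\sup|f|$, hence on each $[0,T]$ the three functions $y_1,y_2,u$ lie in a fixed bounded, uniformly Lipschitz family, $u(0)=0$, and
\[u(t)=\int_0^t\Big(f\big(s,W(s)+y_1(s)\big)-f\big(s,W(s)+y_2(s)\big)\Big)\,ds .\]
Since $f$ is only Borel this identity yields no Gronwall estimate as it stands; everything rests on the fact that averaging $f$ along a Brownian trajectory is a smoothing operation, which must be quantified for almost every fixed $W$.

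The engine is a moment estimate for the functional
\[A_{s,t}(\psi_1,\psi_2)=\int_s^t\Big(f\big(r,W(r)+\psi_1(r)\big)-f\big(r,W(r)+\psi_2(r)\big)\Big)\,dr ,\qquad 0\le s\le t,\]
over bounded Borel shifts $\psi_1,\psi_2:[s,t]\to\R^d$. Expanding the $2m$-th power, applying the Markov property of $W$, and writing the expectation through Gaussian transition densities, one integrates out the spatial variables successively; since $\sup|f|=M$ and $\int_{\R^d}|p_\tau(a,z)-p_\tau(b,z)|\,dz\le C\min(1,|a-b|\tau^{-1/2})$ for the Gaussian kernel $p_\tau$, each integration contributes a factor at most $CM\min\!\big(1,|\psi_1(r_i)-\psi_2(r_i)|\,(r_i-r_{i-1})^{-1/2}\big)$. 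Carrying out the remaining time integrals over the simplex gives, with $\varrho:=\sup_{[s,t]}|\psi_1-\psi_2|$,
\[\IE\,\big|A_{s,t}(\psi_1,\psi_2)\big|^{2m}\le (2m)!\,(CM)^{2m}\,(t-s)\,\big(\varrho\,(t-s)^{1/2}\big)^{2m-1},\]
and on $[0,t]$, as well as on any $[s,t]$ with $t-s\le s$, the first factor contributes no worse than the others, yielding the cleaner bound $(2m)!\,(CM)^{2m}\big(\varrho\,(t-s)^{1/2}\big)^{2m}$; in either case $A_{s,t}(\psi_1,\psi_2)$ has sub-exponential tails with scale of order $\varrho\,(t-s)^{1/2}$. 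When $\psi_1,\psi_2$ are adapted these computations can equivalently be run under a Girsanov change of measure, which is convenient for the Euler polygons below.

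From these moment bounds one passes to an almost-sure regularisation estimate. For shifts confined to a finite-dimensional family — chiefly the constant ones $x,y$ with $x,y$ in a compact set and $s,t\in[0,T]$ — Kolmogorov's continuity theorem, sharpened by a chaining argument that exploits the full sub-exponential tails rather than a second moment, gives a finite $C=C(W,T)$ with
\[\big|A_{s,t}(x,y)\big|\le C\,|x-y|\,(t-s)^{1/2}\,\big(1+\big|\log\big(|x-y|\,(t-s)^{1/2}\big)\big|\big)\]
for all such $s,t,x,y$, the logarithm being the price of the metric entropy of the parameter set. This is where I expect the main difficulty: the shifts $y_1,y_2$ entering the formula for $u$ are genuinely path-dependent and non-adapted, so one needs such control \emph{uniformly over an infinite-dimensional class of Lipschitz shifts}, for which the naive chaining integral diverges. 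One must either (i) freeze the shifts at the left endpoints of a partition — writing $A_{s,t}(y_1,y_2)=A_{s,t}\big(y_1(s),y_2(s)\big)$ plus two error terms $A_{s,t}\big(y_i(\cdot),y_i(s)\big)$ whose shift moves by at most $M(t-s)$ — and bound the error terms separately, by a higher power of the mesh, using adaptedness; or (ii) bypass the difference $u$ entirely by comparing \emph{each} solution with the Euler polygon $x^{(n)}$ of step $1/n$ (whose freezing \emph{is} adapted, so Girsanov applies), proving $\|x-x^{(n)}\|_{\infty,[0,T]}\to0$ a.s.\ for every solution $x$, so that all solutions equal $\lim_n x^{(n)}$.

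Either way, the concluding step is a Gronwall/Osgood-type argument, and it too requires care, since the estimate is only of order $(t-s)^{1/2}$ in time: summing it over $\sim T/h$ pieces of length $h$ produces a factor $Th^{-1/2}$ that diverges. The remedy uses the Lipschitz regularity of the solutions — on a short interval the supremum of the relevant quantity exceeds its value at the left endpoint by at most $M$ times the length of the interval — so that the iteration is controlled by the current value of $|u|$ together with the \emph{mesh} rather than by a global supremum; combined with the Osgood property $\int_{0^+}\frac{d\varrho}{\varrho(1+|\log\varrho|)}=\infty$ of the resulting gauge, and a sufficiently careful (possibly non-uniform) choice of subdivision, one concludes $|u|\le\E$ on $[0,T]$ for every $\E>0$, hence $u\equiv0$. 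Since the null set of $W$ does not depend on the solutions and $T$ is arbitrary, this proves the theorem.
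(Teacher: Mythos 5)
Your high-level architecture matches the paper's: a moment estimate for $\int\{f(t,W(t)+x)-f(t,W(t)+y)\}\,dt$ with constant shifts, converted by union bound over dyadic grids into an almost-sure, pathwise near-Lipschitz estimate, then fed into an Osgood/Gronwall-type iteration along a dyadic partition, exploiting the $1$-Lipschitz regularity of the solutions and taking limits in the piecewise-constant approximants $u_n$. Your diagnosis of the two obstacles (the infinite-dimensional class of shifts, and the $\sqrt{t-s}$-in-time scaling) is exactly right, and both of your proposed remedies (i) and (ii) are present in the paper, in the forms of the dyadic telescoping around (\ref{aa}) and of the Euler-scheme comparison underlying Lemmas~\ref{nla} and~\ref{nlb}.

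The genuine gap is the moment estimate, which is in fact the technical core of the paper (Propositions~\ref{be} and~\ref{mp}), and your proposed derivation of it does not work. The heuristic that, when one integrates out the spatial variables successively, ``each integration contributes a factor at most $CM\min\!\bigl(1,|\psi_1(r_i)-\psi_2(r_i)|(r_i-r_{i-1})^{-1/2}\bigr)$'' is false: after integrating out the innermost variable you obtain a function of the preceding spatial variable that is no longer of the translation-difference form $g(\cdot+x)-g(\cdot+y)$, so the Gaussian-smoothing gain cannot be extracted again from a simple $L^1$ bound on a difference of heat kernels. If one tries to recover the lost cancellation by splitting the intermediate function and invoking its Lipschitz modulus in the spatial variable, one pays a factor $(r_{j+1}-r_j)^{-1}$ which is not integrable over the simplex; even the case $p=2$ is delicate, as the paper itself remarks. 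The paper's actual mechanism is different: assume $g$ smooth, write the difference as $\int_0^x g'$, expand $\IE\bigl(\int_0^1 g'(t,W(t))\,dt\bigr)^p$ as an iterated spacetime integral against heat kernels $E$, and then integrate by parts \emph{repeatedly} so as to move the $p$ derivatives $g'\mapsto g$ onto the kernels, producing the $B=E'$ and $D=E''$ factors and the combinatorics of ``allowed words'' (\ref{b5}); the crucial Lemma~\ref{bl1} (an $L^2$/Plancherel estimate for the $D$ kernel against functions with Gaussian decay) is what kills the would-be $(r_{j+1}-r_j)^{-1}$ divergence. You have not reproduced or replaced that mechanism.

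Two further points worth flagging. First, your bound carries a factor $(2m)!$ rather than the paper's $m!$ (i.e.\ $(p/2)!$), so even if it were provable it would give only sub-exponential, not sub-Gaussian, tails; after the chaining over dyadic neighbours this degrades the exponent $(n^{1/2}+(\log^+\frac1{|x-y|})^{1/2})$ in (\ref{eq7}) to $(n+\log^+\frac1{|x-y|})$, and the iteration in Lemma~\ref{ll} is calibrated rather tightly to the $\sqrt{\log}$ modulus (the quantities $N\sim\log(1/\beta)$ there are \emph{exponentially} large in $m$, so a stray extra power of $N^{1/2}$ is not harmless); you would need to re-do that bookkeeping. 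Second, even granted the a.s.\ near-Lipschitz estimate for constant shifts, the paper needs an additional refinement beyond the naive freezing/telescoping, namely the martingale/Burkholder argument of Lemma~\ref{nla} and its pathwise consequence Lemma~\ref{nlb}, which gives a summed estimate along an Euler orbit that is strictly stronger than summing (\ref{eq7}) term by term; your outline gestures at this under option (ii) but does not supply it. Finally, the measurability/continuity issues you mention in passing (passage to the limit $l\to\infty$ in (\ref{aa}) when $f$ is merely Borel) do require a separate argument, which the paper provides in Lemmas~\ref{l4} and~\ref{l5} via Lusin's theorem and an a.s.\ equicontinuity estimate over the class $\Phi^*$.
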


This theorem can also be regarded as a uniqueness theorem for a random
ODE: writing $x(t)=W(t)+u(t)$, the theorem states that for almost all
choices of $W$, the differential equation $\frac{du}{dt}=f(t,W(t)+u(t))$
with $u(0)=0$ has a unique solution.

In Section \ref{app}, we give an application of this theorem to convergence of
numerical approximations to (\ref{eq1}).\vspace{.2cm}\\
{\bf Idea of proof of theorem.}
The theorem is trivial when $f$ is Lipschitz in $x$, and the idea of the
proof is essentially to find some substitute for a Lipschitz condition.
The proof splits into two parts, the first (section \ref{bes}) being the
derivation of an estimate which acts as a substitute for the Lipschitz
condition, and the second (section \ref{pf}) being the application of this
estimate to prove the theorem. We start with a reduction to a slightly
simpler problem.\vspace{.2cm}\\
{\bf A reduction.} It will be convenient to suppose $|f(t,x)|\leq1$
everywhere, which we can by scaling. Then it will suffice to prove
uniqueness of a solution on [0,1], as we can then repeat to get
uniqueness on [1,2] and so on.

So we work on [0,1], let $X$ be the space of continuous functions
$x:[0,1]\rightarrow\R^d$ with $x(0)=0$, and let $P_W$ be the law of
$\R^d$-valued Brownian motion on [0,1], which can be regarded as a
probability measure on $X$. Now we apply the Girsanov theorem (see
\cite{iw}): define $\phi(x)=\exp\{\int_0^1f(t,x(t))dx(t)-\frac12
\int_0^1f(t,x(t))^2dt\}$, which is well-defined for $P_W$ almost all
 $x\in X$, and define a measure $\mu$ on $X$ by $d\mu=\phi dP_W$. Then
if $x\in X$ is chosen at random with law $\mu$, the path $W\in X$
defined by 
\zz\label{eq2} W(t)=x(t)-\int_0^tf(s,x(s))ds\end{equation}
is a Brownian motion, i.e. $W$ has law $P_W$.

For a particular choice of $x$, and with $W$ defined by (\ref{eq2}), $x$ will
be the unique solution of (\ref{eq1}) provided the only solution of
\zz\label{eq3} u(t)=\int_0^t\{f(s,x(s)+u(s))-f(s,x(s))\}ds\end{equation}
in $X$ is $u=0$. So, to prove the theorem it suffices to show that, for
$\mu$-a.a. $x$, (\ref{eq3}) has no non-trivial solution, since for such $x$,
with $W$ defined by (\ref{eq2}) no other $x$ can satisfy (\ref{eq2}).

But $\mu$ is absolutely continuous w.r.t. $P_W$, so it suffices to show
that, for $P_W$-a.a. $x$, (\ref{eq3}) has no non-trivial solution. In other
words, it suffices to show that, if $W$ is a Brownian motion then with
probability 1 there is no non-trivial solution $u\in X$ of
\zz\label{eq4} u(t)=\int_0^t\{f(s,W(s)+u(s))-f(s,W(s))\}ds\end{equation}
We prove this in section \ref{pf}.\vspace{.2cm}\\
{\bf Remark.} Our proof does not make use of the existence of a strong solution.
It is tempting to try to prove the theorem by measure-theoretic arguments
based on the strong solution and Girsanov's theorem. Define $T: 
X\rightarrow X$ by
\[Tx(t)=x(t)-\int_0^tf(s,x(s))ds\]
The strong solution gives a measurable map $S: E\rightarrow F$ where $E$ and
$F$ are Borel subsets of $X$ with $P_W(E)=P_W(F)=1$, such that $T\circ S$ is
the identity on $E$, and $F$ is the range of $S$. It follows that $T$ is
(1-1) on $F$ and for any $W\in E$ there is a unique solution of (\ref{eq1})
in $F$. But we need a solution which is unique in $X$ and to achieve this we
need to show that $T(X\backslash F)$ is a $P_W$-null set, and this seems
to be a significant obstacle.

Our proof is quite complicated and it seems reasonable to hope that it
can be simplified. In particularly one might expect a simpler proof of
Proposition \ref{mp}. This seems to be nontrivial even for $p=2$. The bound for
$p=2$ follows from the first part of Lemma \ref{bl2} (with $t_0=0$ and
$r=0$) and I do not know an essentially simpler proof.

In one dimension, in the case when $f(t,x)$ depends only on $x$, a different and shorter proof of Theorem \ref{mth} can
be given, using local time, but it is not clear how to extend it to $d>1$.

\section{The basic estimate}\label{bes}

This section is devoted to the proof of the following:
\begin{prop}\label{be} Let $g$ be a Borel function on $[0,1]\times\R^d$ with
$|g(s,z)|\leq1$ everywhere. For any even positive integer $p$ and
$x\in\R^d$, we have
\[\IE\left(\int_0^1\{g(t,W(t)+x)-g(t,W(t))\}dt\right)^p\leq C^p(p/2)!|x|^p\]
where $C$ is an absolute constant, $|x|$ denotes the usual Euclidean norm
and $W(t)$ is a standard $d$-dimensional Brownian motion with $W(0)=0$,
\end{prop}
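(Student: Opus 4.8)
The plan is to estimate the $p$-th moment by expanding the integral as a sum over $p$-tuples of times and bounding the resulting multiple integral. Write $F(t)=g(t,W(t)+x)-g(t,W(t))$, so that we must bound
\[\IE\int_{[0,1]^p}\prod_{i=1}^p F(t_i)\,dt_1\cdots dt_p = p!\int_{0<t_1<\cdots<t_p<1}\IE\Big(\prod_{i=1}^pF(t_i)\Big)\,dt_1\cdots dt_p.\]
So the key quantity is, for an ordered tuple $t_1<\cdots<t_p$, the expectation $\IE\prod_i F(t_i)$. I would condition successively on the Brownian path: condition on $\mathcal{F}_{t_{p-1}}$ and take the conditional expectation of $F(t_p)$. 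The point is that, given $W(t_{p-1})=w$, the conditional expectation $\IE(F(t_p)\mid \mathcal F_{t_{p-1}})$ is $\int (g(t_p,y+x)-g(t_p,y))\,q_{t_p-t_{p-1}}(y-w)\,dy$ where $q_s$ is the Gaussian heat kernel; since $g$ is bounded by $1$, this is bounded in absolute value by $\int|q_s(y-w+x)-q_s(y-w)|\,dy/2 \cdot 2 \le $ something like $\min(1, c|x|/\sqrt{s})$ after using a standard estimate on the $L^1$-difference of translated Gaussians (or more carefully, by differentiating under the integral, a bound of the form $|x|\int_0^1 \|\nabla q_{s}\|_1\,\ldots$). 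Thus each factor contributes a gain of roughly $\min(1,|x|/\sqrt{t_p-t_{p-1}})$, and the dependence on $w$ only matters through the crude bound $\le 1$ at the top level—so after taking absolute values inside, $|\IE\prod_i F(t_i)| \le \prod_{i=2}^p \min(1, c|x|/\sqrt{t_i-t_{i-1}})$ is not quite right because the conditioning has to be done one layer at a time and the inner factors are not independent; the honest statement is $|\IE\prod F(t_i)| \le \prod_{i=1}^p \psi(t_i-t_{i-1})$ with $t_0:=0$ and $\psi(s)=\min(1,c|x|/\sqrt s)$, proved by induction on $p$ pulling out the last factor and using $\|F(t_p)\|_\infty\le 2$ bounded by the conditional-expectation estimate.

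Given that pointwise bound, the remaining task is the deterministic integral
\[p!\int_{0<t_1<\cdots<t_p<1}\prod_{i=1}^p\psi(t_i-t_{i-1})\,dt = p!\int_{0<t_1<\cdots<t_p<1}\prod_{i=1}^p\min\!\Big(1,\frac{c|x|}{\sqrt{t_i-t_{i-1}}}\Big)dt.\]
Substituting $s_i=t_i-t_{i-1}$ this is $p!$ times the volume of $\{s_i>0,\ \sum s_i<1\}$ weighted by $\prod\psi(s_i)$, which is bounded by $p!\prod_{i}\int_0^1\psi(s)\,ds/$ (ordering already accounted for)—more precisely it is at most $\big(\int_0^1\min(1,c|x|/\sqrt s)\,ds\big)^p$ times a combinatorial factor, but the constraint $\sum s_i<1$ is what produces the $(p/2)!$ rather than $p!$. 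One computes $\int_0^1\min(1,c|x|/\sqrt s)\,ds\le c'|x|$ for $|x|\le 1$ (and the whole proposition is trivial for $|x|\ge 1$ since the left side is at most $1$). Keeping the $\sum s_i<1$ constraint: by Hölder or a direct Beta-function computation, $\int_{\sum s_i<1}\prod s_i^{-1/2}\,ds = \Gamma(1/2)^p/\Gamma(p/2+1) = \pi^{p/2}/(p/2)!$, which is exactly where the factor $(p/2)!$ in the denominator of the cumulant-type bound—equivalently the claimed $(p/2)!$ after multiplying by $p!$ and using $p!/(p/2)! \le C^p (p/2)!$—comes from.

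The main obstacle I expect is making the successive-conditioning argument genuinely rigorous: one is tempted to treat the factors $F(t_i)$ as independent once conditioned, but they are not, and the Markov property only lets you strip off one factor at a time while the "head" $\prod_{i<p}F(t_i)$ is $\mathcal F_{t_{p-1}}$-measurable. The clean way is the induction sketched above, where at each step one bounds $\big|\IE\big(\prod_{i=1}^kF(t_i)\big)\big| = \big|\IE\big(\prod_{i=1}^{k-1}F(t_i)\cdot \IE(F(t_k)\mid\mathcal F_{t_{k-1}})\big)\big|$ and uses $|\IE(F(t_k)\mid\mathcal F_{t_{k-1}})|\le\psi(t_k-t_{k-1})$ pointwise, then applies the inductive hypothesis to the first $k-1$ factors after absorbing the constant $\psi(t_k-t_{k-1})$. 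A secondary technical point is the heat-kernel difference estimate $\int_{\R^d}|q_s(y+x)-q_s(y)|\,dy\le C\min(1,|x|/\sqrt s)$, which is elementary (scale out $\sqrt s$, then for $|x|/\sqrt s\le 1$ use $|q_1(y+x/\sqrt s)-q_1(y)|\le (|x|/\sqrt s)\sup_{|e|\le|x|/\sqrt s}|\nabla q_1(y+e)|$ and integrate) but must be stated carefully in $d$ dimensions so that $C$ is dimension-dependent but $p$-independent; the final constant $C$ in the proposition absorbs this.
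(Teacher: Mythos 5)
Your reduction to the deterministic simplex integral at the end is the right target, and the heat‑kernel difference estimate $\int|q_s(y+x)-q_s(y)|\,dy\le C\min(1,|x|/\sqrt s)$ is correct, but the core step --- the claimed bound
\[
\Bigl|\IE\prod_{i=1}^p F(t_i)\Bigr|\le\prod_{i=1}^p\psi(t_i-t_{i-1}),\qquad \psi(s)=\min\bigl(1,c|x|/\sqrt s\bigr),
\]
does not follow from the induction you sketch, and indeed cannot be proved this way. The inductive step asks you to write
\[
\Bigl|\IE\Bigl(\prod_{i=1}^{k-1}F(t_i)\cdot\IE\bigl(F(t_k)\mid\mathcal F_{t_{k-1}}\bigr)\Bigr)\Bigr|
\]
and ``absorb'' the pointwise bound $|\IE(F(t_k)\mid\mathcal F_{t_{k-1}})|\le\psi(t_k-t_{k-1})$. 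But the factor $\IE(F(t_k)\mid\mathcal F_{t_{k-1}})$ is a (random) function of $W(t_{k-1})$, not a constant: once you bound it pointwise and pull out $\psi(t_k-t_{k-1})$, you are left with $\psi(t_k-t_{k-1})\,\IE\bigl|\prod_{i=1}^{k-1}F(t_i)\bigr|$, i.e.\ the absolute value is now \emph{inside} the expectation. The inductive hypothesis controls $|\IE\prod_{i<k}F(t_i)|$, not $\IE\bigl|\prod_{i<k}F(t_i)\bigr|$, and the latter is genuinely not small: for a rapidly oscillating $g$, $\IE|g(t,W(t)+x)-g(t,W(t))|$ is of order $1$ no matter how small $|x|$ is, because the absolute value destroys the cancellation that your conditional‑expectation estimate relies on. So the recursion stops after one factor, and you only get one power of $\psi$, not $p$ of them.

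This is precisely the obstruction the paper's argument is built to overcome. The author remarks explicitly that Proposition~\ref{mp} ``seems to be nontrivial even for $p=2$'' and that the $p=2$ case already needs Lemma~\ref{bl2} (hence Lemma~\ref{bl1}); if the successive‑conditioning argument worked, $p=2$ would be a one‑line estimate. The paper's route is different in kind: it first reduces to $d=1$ and to smooth $g$, rewrites the difference as $\int_0^x g'(\cdot,W+u)\,du$, and then proves the $L^p$ bound on $\int_0^1 g'(t,W(t))\,dt$ by integration by parts --- transferring the derivative from $g$ onto the Gaussian kernels, which produces products of $E$, $B=E'$, $D=E''$ indexed by ``allowed words''. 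The cancellation that you are trying to harvest one time‑step at a time is instead extracted from a $T T^*$‑type Plancherel estimate (Lemma~\ref{bl1}) that couples the kernel $D$ at time $t-s$ with the decay of $E$ or $B$ at the earlier time $s$; that two‑scale interaction is what makes the estimate survive iteration and yields the $1/\Gamma(p/2+1)$ factor by induction on the word length. Your proposal has no analogue of this step, and filling the gap would essentially require reproducing that machinery.
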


This will be deduced from the following one-dimensional version:

\begin{prop}\label{mp} Let $g$ be a compactly supported smooth function on $[0,1]\times\R$ with
$|g(s,z)|\leq1$ everywhere and $g'$ bounded (where the prime denotes
differentiation w.r.t. the second variable). For any even positive integer
$p$, we have
\[\IE\left(\int_0^1g'(t,W(t))dt\right)^p\leq C^p(p/2)!\]
where $C$ is an absolute constant, and here $W(t)$ is one-dimensional
Brownian motion with $W(0)=0$.
\end{prop}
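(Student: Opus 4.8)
The plan is to reduce the moment bound to an estimate over the time simplex and then to extract the necessary cancellation by integration by parts against Gaussian kernels.

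\textbf{Step 1: simplex expansion.} Since $\IE\bigl[\prod_{i=1}^p g'(t_i,W(t_i))\bigr]$ is symmetric in $t_1,\dots,t_p$, expanding the power gives
\[\IE\left(\int_0^1 g'(t,W(t))\,dt\right)^p=p!\int_{0<t_1<\cdots<t_p<1}\IE\left[\prod_{i=1}^p g'(t_i,W(t_i))\right]dt_1\cdots dt_p,\]
and by the Markov property the integrand is a multiple space integral of $\prod_i g'(t_i,x_i)$ against $\prod_i\rho_{t_i-t_{i-1}}(x_i-x_{i-1})$, where $\rho_s$ is the one-dimensional heat kernel and $t_0=0$, $x_0=0$. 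So it suffices to bound this $p$-fold time integral by $C^p(p/2)!$, equivalently to bound the simplex integral of $\bigl|\IE[\prod_i g'(t_i,W(t_i))]\bigr|$ by $C^p(p/2)!/p!$.

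\textbf{Step 2: integration by parts, and the shape of the answer.} Each $g'(t_i,x_i)=\partial_{x_i}g(t_i,x_i)$ is an exact derivative, so I would integrate by parts in the space variables to move the derivatives off the factors $g$ --- whose derivatives must never enter the final bound, since $C$ is to be absolute and only $|g|\le1$ is available --- and onto the adjacent heat kernels. One derivative on a heat kernel costs $\|\rho'_s\|_1=(2/\pi s)^{1/2}$, and such $s^{-1/2}$ singularities are harmless on the simplex because of Beta-type integrals like $\int_0^s(s-t)^{-1/2}t^{-1/2}\,dt=\pi$. The cleanest instance is when $g$ does not depend on $t$: then the heat equation $\rho''_s=2\,\partial_s\rho_s$ collapses the time integral in $v(t,x):=\IE\bigl[\int_t^1 g'(\tau,W(\tau))\,d\tau\bigm| W(t)=x\bigr]$ and shows that $\partial_x v$ is bounded, so that $\int_0^1 g'(t,W(t))\,dt=v(0,0)+\int_0^1\partial_x v(t,W(t))\,dW(t)$ and the Burkholder--Davis--Gundy inequality gives the bound, the factor $(p/2)!$ coming precisely from the $\asymp p^{p/2}$ growth of the BDG constant. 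This shows where $(p/2)!$ should come from in general.

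\textbf{Step 3: the obstacle.} When $g$ genuinely depends on $t$, the collapse in Step 2 fails: integrating by parts in time brings in $\partial_t g$, which is not controlled, and bare integration by parts in space now produces terms carrying $\rho''_s$ with $\|\rho''_s\|_1\asymp s^{-1}$, a singularity that is \emph{not} integrable over the simplex. Such terms cannot be handled by crude $L^1$--$L^\infty$ estimates; the remedy is to re-pair every second derivative of a kernel with a neighbouring heat kernel, using the extra smoothing to restore an integrable $s^{-1/2}$-type bound, and to run the whole estimate as an induction on the number of time points while carrying along an auxiliary number of derivatives parked at the initial time (this auxiliary parameter being $0$ in the present application). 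The hard part is to make the recursion close with an \emph{absolute} constant and exactly the gain $(p/2)!/p!$ --- rather than a bound involving $\|g'\|_\infty$ or one that grows like $C^p p!$ --- and this difficulty is already present at $p=2$, where one must show that $\int_{0<t_1<t_2<1}\bigl|\IE[g'(t_1,W(t_1))\,g'(t_2,W(t_2))]\bigr|\,dt_1\,dt_2$ is bounded by an absolute constant using only $|g|\le1$.
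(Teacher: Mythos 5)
Your outline traces the paper's strategy accurately: the simplex expansion, integration by parts in space so that only $|g|\le 1$ (and never $\|g'\|_\infty$) enters, the observation that a bare second derivative of the heat kernel carries a non-integrable $s^{-1}$ singularity, the remedy of re-pairing each such $D$ with an adjacent $E$ or $B$ to restore integrability, and an induction over the time points with a scaling carried along from the initial time $t_0$. This is essentially the paper's machinery of allowed words in $\{E,B,D\}$ (stripping the $B$'s must leave $(ED)^r$), the representation (\ref{b5}) of $J_k$ as a signed sum of $I_S$'s, and the inductive bound (\ref{b8}).

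But the proposal ends exactly where the real work begins. You correctly identify the crux --- one must show that a paired $E$-$D$ (or $B$-$D$) block integrated over a time slice is bounded by an absolute constant using only $|g|\le 1$ --- and you state explicitly that you do not know how to obtain this, even for $p=2$. That estimate is the paper's Lemma \ref{bl1}, and it is not a formality: its proof requires a dyadic spatial decomposition together with Plancherel, so that $D$ becomes $-u^2 e^{-(t-s)u^2/2}$ in Fourier and the factor $u^2$ is absorbed by integrating the Gaussian in $t-s$ before integrating in $u$; the resulting bound is what kills the $s^{-1}$ singularity. From Lemma \ref{bl1} one then needs Corollary \ref{bc1} and the scaled Lemma \ref{bl2}, whose factors $(1+r)^{-1}(1-t_0)^{r+1}$ and $(1+r)^{-1/2}(1-t_0)^{r+1/2}$ are what accumulate across the induction into $1/\Gamma(k/2+1)$, and hence produce the $(p/2)!$ in the statement. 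Without these the recursion you describe cannot close with an absolute constant. (A minor further point: the paper's induction does not park ``an auxiliary number of derivatives at the initial time''; what is carried is the start point $(t_0,z_0)$, a polynomial weight $(1-t)^r$, and the word structure itself, with derivatives immediately converted into $B$ or $D$ factors.) So the plan points in the right direction, but the central lemma is missing and the proposal does not constitute a proof.
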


\begin{proof} We start by observing that the LHS can be written as
\[p!\int_{0<t_1<\cdots<t_p<1}\IE\prod_{j=1}^pg'(t_j,W(t_j))dt_1\cdots dt_p\]
and using the joint distribution of $W(t_1),\cdots,W(t_p)$ this can be
expressed as
\[p!\int_{0<t_1<\cdots<t_p<1}\int_{\R^p}\prod_{j=1}^p\{g'(t_j,z_j)
E(t_j-t_{j-1},z_j-z_{j-1})\}dz_1\cdots dz_pdt_1\cdots dt_p\]
where $E(t,z)=(2\pi t)^{-1/2}e^{-z^2/2t}$ and here $t_0=0$, $z_0=0$.

We introduce the notation
\[J_k(t_0,z_0)=\int_{t_0<t_1<\cdots<t_k<1}\int_{\R^k}\prod_{j=1}^k\{
g'(t_j,z_j)E(t_j-t_{j-1},z_j-z_{j-1})\}dz_1\cdots dz_kdt_1\cdots dt_k\]
and we shall show that $J_p(0,0)\leq C^p/\Gamma(\frac p2+1)$; Proposition
\ref{mp} will then follow since\\$p!\leq2^p((p/2)!)^2$.

In order to estimate $J_k$ we use integration by parts to shift the
derivatives to the exponential terms. We introduce some notation to handle
the resulting terms - we define $B(t,z)=E'(t,z)$ and $D(t,z)=E''(t,z)$
(where again primes denote differentiation w.r.t. the second variable). 

If $S=S_1\cdots S_k$ is a word in the alphabet $\{E,B,D\}$ then we define
\[I_S(t_0,z_0)=\int_{t_0<t_1<\cdots<t_k<1}\int_{\R^d}\prod_{j=1}^k
\{g(t_j,z_j)S_j(t_j-t_{j-1},z_j-z_{j-1})\}dz_1\cdots dz_kdt_1\cdots dt_k\]

In fact, only certain words in $\{E,B,D\}$ will be required: we say a
word is {\em allowed} if, when all $B$'s are removed from the word, a
word of the form $(ED)^r=EDED\cdots ED$, $r\geq0$, is left. The allowed
words of length $k$ correspond to the subsets of $\{1,2,\cdots,k\}$
having an even number of members (namely the set of positions occupied
by $E$ and $D$ in the word). Hence the number of allowed words of length
$k$ is the number of such subsets of $\{1,2,\cdots,k\}$, namely
$2^{k-1}$.

We shall show that
\zz\label{b5}J_k(t_0,z_0)=\sum_{j=1}^{2^{k-1}}\pm I_{S^{(j)}}(t_0,z_0)
\end{equation} where each $S^{(j)}$ is an allowed word of length $k$ (in fact
each allowed word of length $k$ appears exactly once in this sum, but we do
not need this fact). The proof will then be completed by obtaining a bound
for $I_S$.

We prove (\ref{b5}) by induction on $k$. So, assuming (\ref{b5}) for $J_k$,
we have 
\[\begin{split}J_{k+1}(t_0,z_0)&=\int_{t_0}^1dt_1\int g'(t_1,z_1)E(t_1-t_0,
z_1-z_0)J_k(t_1,z_1)dz_1\\
&=-\int_{t_0}^1dt_1\int g(t_1,z_1)B(t_1-t_0,z_1-z_0)J_k(t_1,z_1)dz_1\\
&\ \ \ -\int_{t_0}^1\int g(t_1,z_1)E(t_1-t_0,z_1-z_0)J_k'(t_1,z_1)dz_1
\end{split}\]
Now we observe that, if $S$ is an allowed string then $I_S'=-I_{\tilde{S}}$
where $\tilde{S}$ is defined as $BS^*$ if $S=ES^*$ and as $DS^*$ if $S=BS^*$
(note that $\tilde{S}$ is not an allowed string). Applying this to (\ref{b5})
we find $J_k'(t_0,z_0)=\sum_{j=1}^{2^{k-1}-1}\mp I_{\tilde{S}^j}(t_0,z_0)$
and then we obtain
\[J_{k+1}(t_0,z_0)=\mp\sum_{j=1}^{2^{k-1}-1}I_{BS^j}(t_0,z_0)\pm\sum_{j=1}
^{2^{k-1}-1}I_{E\tilde{S}^j}(t_0,z_0)\]
Noting that, if $S$ is an allowed string, $BS$ and $E\tilde{S}$ are also
allowed, this completes the inductive proof of (\ref{b5}).

We now proceed to the estimation of $I_S(t_0,z_0)$, when $S$ is an
allowed string. We start with some preliminary lemmas.
\begin{lemma}\label{bl1} There is a constant $C$ such that, if $\phi$ and
$h$ are real-valued Borel functions on $[0,1]\times\R$ with $|\phi(t,y)|\leq
e^{-y^2/3t}$ and $|h(t,y)|\leq1$ everywhere, then
\[\left|\int_{1/2}^1dt\int_{t/2}^tds\int_\R\int_\R\phi(s,z)h(t,y)
D(t-s,y-z)dydz\right|\leq C\]
\end{lemma}

\begin{proof} Denote the above integral by $I$. For $l\in\Z$, let $\chi_l$
be the characteristic function of the interval $[l,l+1)$ and define $\phi_l
(s,y)=\phi(s,y)\chi_l(y)$, and similarly $h_l$. Let $I_{lm}$ denote the
integral $I$ with $\phi,h$ replaced by $\phi_l,h_m$. Then we have $I=\sum_
{l,m\in\Z}I_{lm}$. Let $C_1,C_2,\cdots$ denote positive absolute constants.

Now if $|l-m|=k\geq2$ then for $z\in[l,l+1)$ and $y\in[m,m+1)$ we have
$|z-y|\geq k-1$ and then it follows easily that
\[\left|D(t-s,y-z)\right|\leq C_1e^{-(k-2)^2/4}\]
and hence $I_{lm}\leq C_2e^{-l^2/8}e^{-(k-2)^2/4}$ from which we deduce
\[\sum_{|l-m|\geq2}|I_{lm}|\leq C_3\]

Now suppose $|l-m|\leq1$. We use $\hat{\phi}_l(s,u)$ for the Fourier
transform in the second variable, and similarly $\hat{h}_m$. We note that
$\int\hat{\phi}_l(s,u)^2du=\int\phi_l(s,z)^2dz\leq C_4e^{-|l|^2/6}$ for $0
\leq s\leq1$ and similarly $\int\hat{h}_m(t,u)^2du\leq1$. We have
\[I_{lm}=\int_{1/2}^1dt\int_{t/2}^tds\int_\R\hat{\phi}_l
(s,u)\hat{h}_m(t,-u)e^{-(t-s)|u|^2/2}u^2du\]
Applying $ab\leq\frac12(a^2c+b^2c^{-1}$ with $a=\hat{\phi}_l(s,u)$, $b=\hat
{h}_m(t,-u)$ and $c=e^{l^2/12}$, we deduce that
\[\begin{split}|I_{lm}|\leq
&\int_{1/2}^1dt\int_{t/2}^tds\int_\R\hat{\phi}_l(s,u)^2
e^{l^2/12}u^2e^{-(t-s)u^2/2}du\\&+\int_{1/2}^1dt\int_{t/2}^tds\int_\R
\hat{h}_m(-t,u)^2e^{-l^2/12}u^2e^{-(t-s)u^2/2}du\end{split}\]
In the first integral we integrate first w.r.t. $t$ and obtain the bound
const.$e^{-l^2/12}$ for the integral. We get a similar
bound for the second integral (integrating w.r.t. $s$ first), and hence
\[|I_{lm}|\leq C_5e^{-l^2/12}\]
Summing over $l$ and $m$ such that $|l-m|\leq1$, we obtain
\[\sum_{|l-m|\leq1}|I_{lm}|\leq C_6\]
which completes the proof.
\end{proof}

\begin{cor}\label{bc1} There is an absolute constant $C$ such that
if $g$ and $h$ are Borel functions on $[0,1]\times\R$
bounded by 1 everywhere then
\[\left|\int_{1/2}^1dt\int_{t/2}^tds\int_{\R^2}g(s,z)E(s,z)h(t,y)
D(t-s,y-z)dydz\right|\leq C\]
and
\[\left|\int_{1/2}^1dt\int_{t/2}^tds\int_{\R^2}g(s,z)B(s,z)h(t,y)
D(t-s,y-z)dydz\right|\leq C\]
\end{cor}

\begin{proof} These follow easily from Lemma (\ref{bl1}), the second using
the easily verified fact that $|B(s,z)|\leq Cs^{-1/2}(e^{-z^2/3s})$.
\end{proof}

We note that $\int_\R E(t,z)dz=1$, and we have the bounds
\zz\label{b7}\int_\R|B(t,z)|dz\leq C_0t^{-1/2},\ \ \ \ \ \
\int_\R|D(t,z)|dz\leq C_0t^{-1}\end{equation}
where $C_0$ is an absolute constant.

\begin{lemma}\label{bl2} There is an absolute constant $C$ such that
if $g$ and $h$ are Borel functions on $[0,1]\times\R$
bounded by 1 everywhere, and $r\geq0$ then
\[\left|\int_{t_0}^1dt\int_{t_0}^tds\int_{\R^2}g(s,z)E(s-t_0,z)h(t,y)
D(t-s,y-z)(1-t)^rdydz\right|\leq C(1+r)^{-1}(1-t_0)^{r+1}\]
and
\[\left|\int_{t_0}^1dt\int_{t_0}^tds\int_{\R^2}g(s,z)B(s-t_0,z)h(t,y)
D(t-s,y-z)(1-t)^rdydz\right|\leq C(1+r)^{-1/2}(1-t_0)^{r+\frac12}\]
\end{lemma}

\begin{proof} Again, we let $C_1,\cdots$ be absolute constants. By using the
change of variables $t'=(t-t_0)/(1-t_0)$, $s'=(s-t_0)/(1-t_0)$, $y'=y(1-t_0)
^{-1/2}$, it suffices to prove these estimates when $t_0=0$. To do this, we
start by scaling the first part of Corollary \ref{bc1}, and get
\[\left|\int_{2^{-k-1}}^{2^{-k}}dt\int_{t/2}^tds\int_{\R^2}g(s,z)
E(s,z)h(t,y)D(t-s,y-z)(1-t)^rdydz\right|\leq C_1(1-2^{-k-1})^r2^{-k}\]
for $k=0,1,2\cdots$ and then by summing over $k$, we get
\[\left|\int_0^1dt\int_{t/2}^tds\int_{\R^2}g(s,z)
A(s,z)h(t,y)D(t-s,y-z)(1-t)^rdydz\right|\leq C_2(1+r)^{-1}\]
Moreover, from the bounds (\ref{b7}) we have
\[\begin{split}&\left|\int_0^1dt\int_0^{t/2}ds\int_{\R^2}g(s,z)
E(s,z)h(t,y)D(t-s,y-z)(1-t)^rdydz\right|\leq\\
&\leq C_3\int_0^1dt\int_0^{t/2}(t-s)^{-1}(1-t)^rds\leq
C_4(1+r)^{-1}\end{split}\]
and combining these bounds gives the first result. Similarly,
by scaling the second part of Corollary \ref{bc1}, we get
\[\left|\int_{2^{-k-1}}^{2^{-k}}dt\int_{t/2}^tds\int_{\R^2}g(s,z)
B(s,z)h(t,y)D(t-s,y-z)(1-t)^rdydz\right|\leq C_5(1-2^{-k-1})^r2^{-k/2}\]
for $k=0,1,2\cdots$ and then by summing over $k$, we get
\[\left|\int_0^1dt\int_{t/2}^tds\int_{\R^2}g(s,z)
B(s,z)h(t,y)D(t-s,y-z)(1-t)^rdydz\right|\leq C_6(1+r)^{-1/2}\]
Moreover, from the bounds (\ref{b7}) we have
\[\begin{split}&\left|\int_0^1dt\int_0^{t/2}ds\int_{\R^2}g(s,z)
B(s,z)h(t,y)D(t-s,y-z)(1-t)^rdydz\right|\leq\\
&\leq C_0\int_0^1dt\int_0^{t/2}(t-s)^{-1}(1-t)^rds\leq
C_7(1+r)^{-1/2}\end{split}\]
which give the second result.
\end{proof}

We can now complete the proof of Proposition \ref{mp} by obtaining the
required bound for $I_S(t_0,z_0)$. Again we use $C_1,C_2,\cdots$ for absolute
constants. We shall show that, for a suitable
choice of $M$, we have for any allowed string $S$ of length $k$
\zz\label{b8}|I_S(t_0,z_0)|\leq\frac{M^k}{\Gamma(\frac k2+1)}(1-t_0)^{k/2}
\end{equation}
We shall prove (\ref{b8}) by induction on $k$, provided $M$ is chosen large
enough. The case $k=0$ is immediate, so assume $k>0$ and that (\ref{b8})
holds for all allowed strings of length less than $k$. Then
there are three cases: (1) $S=BS'$ where $S'$ has length $k-1$; (2)
$S=EDS'$ where $S'$ has length $k-2$; (3) $S=EB^mDS'$ where $m\geq1$ and
$S'$ has length $k-m-2$. In each case $S'$ is an allowed string. We
consider the three cases separately.\vspace{.1cm}\\
{\bf Case 1.} In this case we have
\[\begin{split}|I_S(t_0,z_0)|&=\left|\int_{t_0}^1dt_1\int_\R
B(t_1-t_0,z_1-z_0)g(t_1,z_1)I_{S'}(t_1,z_1)dz_1\right|\\
&\leq\frac{M^{k-1}}{\Gamma(\frac{k+1}2)}\int_{t_0}^1
(1-t_1)^{(k-1)/2}dt_1\int_\R|B(t_1-t_0,z_1-z_0)|dz_1\\
&\leq\frac{C_1M^{k-1}}{\Gamma(\frac{k+1}2)}\int_{t_0}^1
(1-t_1)^{(k-1)/2}(t_1-t_0)^{-1/2}dt_1\\&=C_1\sqrt{\pi}M^{k-1}
(1-t_0)^{k/2}/\Gamma\left(\frac k2+1\right)\end{split}\]
where we have used the inductive hypothesis to bound $I_{S'}$, and then
the bound (\ref{b7}). (\ref{b8}) then follows if $M$ is large enough.\vspace{.1cm}\\
{\bf Case 2.} Now we have
\[I_S(t_0,z_0)=\int_{t_0}^1dt_1\int_{t_1}^1dt_2\int_{\R^2}g(t_1,z_1)
g(t_2,z_2)E(t_1-t_0,z_1-z_0)D(t_2-t_1,z_2-z_1)I_{S'}(t_2,z_2)dz_1dz_2\] 
We set $h(t,z)=g(t,z)I_{S'}(t,z) (1-t)^{1-\frac k2}$ so that
$\|h\|_\infty\leq M^{k-2}/\Gamma(k/2)$ by the inductive
hypothesis, and then from the first part of Lemma \ref{bl2} we deduce that
\[|I_S(t_0,z_0)|\leq\frac{C_2M^{k-2}(1-t_0)^{k/2}}{k\Gamma(k/2)}\] and
(\ref{b8}) follows if $M$ is large enough.\vspace{.1cm}\\
{\bf Case 3.} In this case have
\[\begin{split}&I_S(t_0,z_0)=\int_{t_0<t_1<\cdots<t_{m+2}<1}dt_1\cdots
dt_{m+2}\int_{\R^{m+2}}\left(\prod_{j=1}^{m+2}g(t_j,z_j)\right)
E(t_1-t_0,z_1-z_0)\times\\&\times\prod_{j=2}^{m+1}
B(t_j-t_{j-1},z_j-z_{j-1})D(t_{m+2}-t_{m+1},z_{m+2}-z_{m+1})
I_{S'}(t_{m+2},z_{m+2})dz_1\cdots dz_{m+2}\end{split}\]
Now let $h(t,z)=g(t,z)I_{S'}(t,z)(1-t_{m+2})^{(2+m-k)/2}$, so that by the
inductive hypothesis on $S'$ we have $\|h\|_\infty\leq M^{k-m-2}
/\Gamma(\frac{k-m}2)$. Then, writing
\[\begin{split}\Omega(t,z)=&\int_t^1dt_{m+1}\int_{t_{m+1}}^1dt_{M+2}\int_
{\R^2}g(t_{m+1},z_{m+1})h(t_{m+2},z_{m+2})(1-t_{m+2})^{(k-m-2)/2}\times\\&
\times B(t_{m+1}-t,z_{m+1}-z)D(t_{m+2}-t_{m+1},z_{m+2}-z_{m+1})dz_{m+1}
dz_{m+2}\end{split}\]
we find from Lemma 2 that
\[|\Omega(t,z)|\leq C_3(k-m)^{-1/2}M^{k-m-2}(1-t)^{(k-m-1)/2}
/\Gamma\left(\frac{k-m}2\right)\]
Using this in
\[\begin{split}I_S(t_0,z_0)=&\int_{t_0<t_1<\cdots<t_m<1}dt_1\cdots
dt_m\int_{\R^m}\left(\prod_{j=1}^mg(t_j,z_j)\right)
E(t_1-t_0,z_1-z_0)\times\\&\times\prod_{j=2}^mB(t_j-t_{j-1},z_j-z_{j-1})
\Omega(t_m,z_m)dz_1\cdots dz_m\end{split}\] 
and using the bounds (\ref{b7}) we find
\[\begin{split}|I_S(t_0,z_0)|&\leq C_4^{m+1}(k-m)^{-1/2}\frac{M^{k-m-2}}
{\Gamma(\frac{k-m}2)}\int_{t_0<t_1<\cdots<t_m<1}(t_2-t_1)^{-1/2}
\cdots\\&\cdots(t_m-t_{m-1})^{-1/2}(1-t_m)^{(k-m-1)/2}dt_1\cdots dt_m\\
&=C_4^{m+1}(k-m)^{-1/2}\frac{M^{k-m-2}\pi^{(m-1)/2}\Gamma(\frac{k-m+1}2)}
{\Gamma(\frac{k-m}2)\Gamma(\frac k2+1)}(1-t_0)^{k/2}\end{split}\]
from which again (\ref{b8}) follows, provided $M$ is large enough. Putting
(\ref{b8}) with $t_0=0$, $z_0=0$ and $k=p$ in (\ref{b5})
completes the proof of Proposition \ref{mp}.
\end{proof}

\noindent
{\em Proof of Proposition \ref{be}}. We first
note that it suffices to prove it for $d=1$. To see this let $g,W,x$ be
as in the statement of Proposition \ref{be}. By a rotation of coordinates
we can suppose $x=(\alpha,0,\cdots,0)$. Then for fixed Brownian paths $W_2,
\cdots,W_d$ we can define $h$ on $[0,1]\times\R$ by $h(t,u)=g(t,u,W_2(t),
\cdots,W_d(t))$ and the $d=1$ case of the Proposition gives
\[\IE\left(\int_0^1\{h(t,W_1(t)+\alpha)-h(t,W_1(t))\}dt\right)^p\leq C^p(p/2)!
|\alpha|^p\]
and then the required result follows by averaging over $W_2,\cdots,W_d$.

So we suppose $d=1$. Given a Borel function $g$ on $[0,1]\times\R$ with
$|g|\leq1$ we can find a sequence of compactly supprted smooth functions $g_n$ with $|g_n|
\leq1$, converging to $g$ a.e. on $[0,1]\times\R$. Then $g_n(t,W(t))\rightarrow
g(t,W(t))$ a.s. for a.a. $t\in[0,1]$, and the same for $g_n(t,W(t)+x)$, so by
Fatou's lemma it suffices to prove the proposition for smooth $g$. But then
we have $g(t,W(t)+x)-g(t,W(t))=\int_0^xg'(t,W(t)+u)du$ and we can apply
Proposition \ref{mp} and Minkowsi's inequality to conclude the proof of
Proposition \ref{be}.

What we in fact need is a scaled version of Proposition \ref{be} for subintervals
of [0,1]. For $s\geq0$ we denote by ${\mathcal F}_s$ the $\sigma$-field generated by $\{W(\tau):0<\tau<s\}$. Then we can
state the required result:

\begin{cor}\label{bc2} Let $g$ be a Borel function on $[0,1]\times\R^d$
with $|g|\leq1$ everywhere. Let $0\leq s\leq a<b\leq1$ and let $\rho(x)=
\int_a^b\{g(W(t)+x)-g(W(t))\}dt$. Then for $x\in\R^d$ and $\lambda>0$ we have
\[\IP(|\rho(x)|\geq\lambda l^{1/2}|x|\ |{\mathcal F}_s)\leq2e^{-\lambda^2
/(2C^2)}\]
where $l=b-a$ and $C$ is the constant in Proposition \ref{be}.
\end{cor}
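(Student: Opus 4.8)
The plan is to deduce from Proposition~\ref{be} a conditional moment bound of the form
\[\IE\big(\rho(x)^p\mid{\mathcal F}_s\big)\le C^p\,(p/2)!\,\big(l^{1/2}|x|\big)^p\qquad\text{for every even }p,\]
and then turn moments into a sub-Gaussian tail estimate in the usual way. We may assume $x\ne0$, the case $x=0$ being trivial since then $\rho\equiv0$.

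For the moment bound I would first pass from ${\mathcal F}_s$ to ${\mathcal F}_a$: since $s\le a$ we have ${\mathcal F}_s\subseteq{\mathcal F}_a$, so by the tower property it suffices to bound $\IE(\rho(x)^p\mid{\mathcal F}_a)$, and conditioning on ${\mathcal F}_a$ is convenient because the Brownian increments $\{W(t)-W(a):a\le t\le b\}$ are independent of ${\mathcal F}_a$. Next I would use Brownian scaling: the process $W^*(u):=l^{-1/2}\big(W(a+lu)-W(a)\big)$, $u\in[0,1]$, is a standard $d$-dimensional Brownian motion, is a function of those increments alone (hence independent of ${\mathcal F}_a$), and the substitution $t=a+lu$, with $W(a+lu)=W(a)+l^{1/2}W^*(u)$, gives
\[\rho(x)=l\int_0^1\big\{h(u,W^*(u)+l^{-1/2}x)-h(u,W^*(u))\big\}\,du,\]
where $h(u,\zeta):=g\big(a+lu,\,W(a)+l^{1/2}\zeta\big)$. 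For each $\omega$, $h$ is a Borel function on $[0,1]\times\R^d$ bounded by $1$, depending on $\omega$ only through the ${\mathcal F}_a$-measurable variable $W(a)$, while $l^{-1/2}x$ is deterministic. Conditioning on ${\mathcal F}_a$ thus freezes $h$ and $l^{-1/2}x$ and leaves $W^*$ an independent standard Brownian motion, so a conditional form of Proposition~\ref{be} gives $\IE\big((\int_0^1\{\cdots\}\,du)^p\mid{\mathcal F}_a\big)\le C^p(p/2)!\,|l^{-1/2}x|^p$; multiplying by $l^p$ yields the desired bound for $\IE(\rho(x)^p\mid{\mathcal F}_a)$, and the tower property transfers it to ${\mathcal F}_s$.

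Finally, writing $A:=C^2l|x|^2$, the moment bound reads $\IE(\rho(x)^{2k}\mid{\mathcal F}_s)\le k!\,A^k$ for all $k\ge0$, so summing the power series (monotone convergence for conditional expectations) gives $\IE\big(e^{\theta\rho(x)^2}\mid{\mathcal F}_s\big)\le(1-\theta A)^{-1}$ for $0\le\theta<1/A$; with $\theta=1/(2A)$ this is $\le2$, and applying the conditional Markov inequality to $e^{\rho(x)^2/(2A)}$ gives
\[\IP\big(|\rho(x)|\ge\lambda l^{1/2}|x|\mid{\mathcal F}_s\big)\le 2\,e^{-\lambda^2 l|x|^2/(2A)}=2\,e^{-\lambda^2/(2C^2)},\]
which is the assertion. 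The only step needing genuine care is the ``conditional form of Proposition~\ref{be}'' --- the claim that one may condition on ${\mathcal F}_a$ and treat the random Borel function $h$ as a fixed argument of the proposition. This is routine (e.g.\ because, by independence, a regular conditional distribution of $W^*$ given ${\mathcal F}_a$ is almost surely Wiener measure, so for a.e.\ $\omega$ the conditional expectation in question is an ordinary expectation of the type bounded in Proposition~\ref{be}), but it should be stated explicitly, together with the elementary check that $W^*$ is independent of ${\mathcal F}_a$.
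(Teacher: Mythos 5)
Your proof is correct and follows essentially the same route as the paper: reduce to the unit interval by Brownian scaling (introducing the rescaled independent Brownian motion and the frozen function $h$), apply Proposition~\ref{be} conditionally to obtain the even moment bounds, and convert these to a sub-Gaussian tail via an exponential moment and Markov's inequality. The only difference is cosmetic --- the paper first proves the tail bound in the normalized case $s=a=0$, $b=1$ and then reduces to it, whereas you reduce first and then apply the Chernoff argument; you also spell out the conditioning on ${\mathcal F}_a$ somewhat more carefully than the paper does.
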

\begin{proof} First assume $s=a=0$, $b=1$. Let $\alpha=(2C^2|x|^2)^{-1}$. Then
\[\IE(e^{\alpha\rho(x)^2})=\sum_{k=0}^\infty\frac{\alpha^k}{k!}
\IE(\rho(x)^{2k})\leq\sum_{k=0}^\infty\alpha^kC^{2k}|x|^{2k}=2\]
and so $\IP(|\rho(x)|\geq\lambda|x|)=\IP(e^{\alpha\rho(x)^2}\geq
e^{\alpha\lambda^2|x|^2})\leq2e^{-\alpha\lambda^2|x|^2}
=2e^{-\lambda^2/(2C^2)}$.

For the general case, let $\tilde{W}(t)=l^{-1/2}\{W(a+tl)-W(a)\}$, so that
$\tilde{W}$ is a standard Brownian motion, and let $h(x)=g(W(a)+x)$. Then
$\rho(x)=l^{1/2}\int_0^1\{h(\tilde{W}(t)+x)-h(\tilde{W}(t))\}dt$ and it
follows from the first part that $\IP(|\rho(x)|\geq\lambda l^{1/2}|x|\ |
{\mathcal F}_a)\leq2e^{-\lambda^2/(2C^2)}$. The required result then follows
by taking conditional expectation w.r.t. ${\mathcal F}_s$.
\end{proof}

We note that the unconditional bound
\[\IP(|\rho(x)|\geq\lambda l^{1/2}|x|)\leq2e^{-\lambda^2/(2C^2)}\]
follows by taking $s=0$. Also in the same way we obtain, for any even $p\in\mathbb{N}$,
\zz\label{bp}\IE(\rho(x)^p|{\mathcal F}_s)\leq C^pl^{p/2}(p/2)!|x|^p\z

The following lemma will also be needed:
\begin{lemma}\label{bl3} If $p>1+\frac d2$ there is a constant $c(p,d)$ such
that is $g\in L^p([0,1]\times\R^d)$ then
\[\IE\left(\int_0^1g(t,W(t))dt\right)^2\leq c(p,d)\|g\|_p^2\]
\end{lemma}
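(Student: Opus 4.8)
The plan is to estimate $\IE\left(\int_0^1 g(t,W(t))\,dt\right)^2$ by expanding the square and using the joint density of the Brownian motion at two times, exactly as in the opening of the proof of Proposition \ref{mp}. Writing the square as a double integral over $0<t_1<t_2<1$ (times $2$), the expectation becomes
\[
2\int_{0<t_1<t_2<1}\int_{\R^d}\int_{\R^d} g(t_1,z_1)g(t_2,z_2)\,E_d(t_1,z_1)\,E_d(t_2-t_1,z_2-z_1)\,dz_1\,dz_2\,dt_1\,dt_2,
\]
where $E_d(t,z)=(2\pi t)^{-d/2}e^{-|z|^2/2t}$ is the $d$-dimensional heat kernel. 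I would then bound this by applying H\"older's inequality in the $z$-variables, or more efficiently by viewing the inner integral as a convolution and using the $L^p$--$L^{p'}$ mapping properties of the heat semigroup.

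The key estimate I expect to need is a bound of the form $\|E_d(t,\cdot)\|_{p'} \leq c\, t^{-\frac{d}{2p}}$ (by direct Gaussian computation, with $\frac1p+\frac1{p'}=1$), together with Young's inequality for convolutions. Concretely, after fixing $t_1,t_2$, one controls $\int g(t_2,\cdot)\,[E_d(t_2-t_1,\cdot)*(g(t_1,\cdot)E_d(t_1,\cdot))]$ by $\|g(t_2,\cdot)\|_p$ times the $L^{p'}$ norm of the convolution, and the latter is at most $\|E_d(t_2-t_1,\cdot)\|_1\,\|g(t_1,\cdot)E_d(t_1,\cdot)\|_{p'}$ or, trading integrability differently, $\|E_d(t_2-t_1,\cdot)\|_{p'}\,\|g(t_1,\cdot)E_d(t_1,\cdot)\|_1$. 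One then uses $\|g(t_1,\cdot)E_d(t_1,\cdot)\|_1 \leq \|g(t_1,\cdot)\|_p\,\|E_d(t_1,\cdot)\|_{p'} \leq c\,\|g(t_1,\cdot)\|_p\,t_1^{-\frac{d}{2p}}$. This produces a kernel $t_1^{-\frac{d}{2p}}(t_2-t_1)^{-\frac{d}{2p}}$ (or a similar product of such factors) multiplying $\|g(t_1,\cdot)\|_p\|g(t_2,\cdot)\|_p$, integrated over $0<t_1<t_2<1$. A further application of Cauchy--Schwarz in $(t_1,t_2)$ against $\|g(t_1,\cdot)\|_p\|g(t_2,\cdot)\|_p$ reduces matters to showing the kernel $K(t_1,t_2)=t_1^{-\frac{d}{2p}}(t_2-t_1)^{-\frac{d}{2p}}$, suitably arranged, lies in $L^2$ (or defines a bounded operator) on $\{0<t_1<t_2<1\}$, and this is exactly where the hypothesis $p>1+\frac d2$ enters: it forces $\frac{d}{2p}<\frac{1}{2}\cdot\frac{2p}{2p}$, i.e. the exponents are small enough that the relevant $t$-integrals converge and $\iint_{0<t_1<t_2<1}K(t_1,t_2)^2\,dt_1\,dt_2<\infty$.

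The main obstacle is bookkeeping the exponents so that both the spatial Gaussian norms and the temporal singularities are simultaneously integrable, and in particular checking that the precise threshold $p>1+\frac d2$ is what makes the final $t$-integral finite. The condition $\frac{d}{2p}<\frac12$ is equivalent to $p>d$, which is weaker, so the sharper requirement $p>1+\frac d2$ must come from needing the product of the time singularities, after the Cauchy--Schwarz step, to be square-integrable: one needs $\int_0^1\int_0^{t_2} t_1^{-d/p}(t_2-t_1)^{-d/p}\,dt_1\,dt_2<\infty$, which by the Beta-integral evaluation requires $\frac dp<1$ twice and also $1-\frac{2d}{p}+1>0$ for the outer integral near $t_2=0$ — giving $\frac dp<1$, hence $p>d$; to get down to $p>1+d/2$ one instead keeps one factor of $E_d$ at its full $L^1$ norm $1$ and only pays the $L^{p'}$ cost once, yielding a single $t^{-d/(2p)}$-type singularity and the constraint $2\cdot\frac{d}{2p}<1$ combined with the extra time integration, i.e. $\frac dp<1$ refined by the dimension count to $p>1+\frac d2$. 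I would organize the proof to make this exponent arithmetic transparent, and the constant $c(p,d)$ will come out as an explicit product of Gamma functions and the Young/H\"older constants.
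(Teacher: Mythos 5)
Your setup is the paper's: expand the square, express the expectation as a double time integral against the product of Gaussian kernels, and estimate $\|E_d(t,\cdot)\|_{p'}\leq c\,t^{-d/(2p)}$. Following your second option (paying the $L^{p'}$ cost on each Gaussian, one through $\|g(t_1,\cdot)E_d(t_1,\cdot)\|_1\leq\|g(t_1,\cdot)\|_p\|E_d(t_1,\cdot)\|_{p'}$ and one through $\|E_d(t_2-t_1,\cdot)\|_{p'}$) you correctly reduce to bounding
\[
\int_0^1\!\int_0^{t_2}\phi(t_1)\phi(t_2)\,t_1^{-\alpha}(t_2-t_1)^{-\alpha}\,dt_1\,dt_2,\qquad\alpha=\tfrac{d}{2p},\ \ \phi(t)=\|g(t,\cdot)\|_p,
\]
by $c\,\|\phi\|_{L^p(0,1)}^2$. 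But the closing step is wrong. Cauchy--Schwarz here requires the kernel to lie in $L^2$, i.e.\ $2\alpha<1$, i.e.\ $p>d$; for $d\geq3$ this is a \emph{stronger} hypothesis than $p>1+d/2$ (your sentence ``which is weaker'' has the inequality reversed), so the argument fails on the whole range $1+d/2<p\leq d$. It also requires $\phi\in L^2$, which is not guaranteed when $d=1$ and $3/2<p<2$. Your attempted fix --- pay the $L^{p'}$ cost only once and invoke ``a dimension count'' --- is not an argument: you would be left with $\|g(t_1,\cdot)E_d(t_1,\cdot)\|_{p'}$, which is not controlled by $\|g(t_1,\cdot)\|_p$, and the claimed exponent is never derived.

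The step that actually produces the threshold $p>1+d/2$, and is what the paper's terse ``follows from H\"older's inequality'' means, is H\"older with exponents $(p,p')$ in the time integrals, not $(2,2)$. In $t_1$:
\[
\int_0^{t_2}\phi(t_1)\,t_1^{-\alpha}(t_2-t_1)^{-\alpha}\,dt_1\leq\|\phi\|_{L^p}\left(\int_0^{t_2}t_1^{-\alpha p'}(t_2-t_1)^{-\alpha p'}\,dt_1\right)^{1/p'}=c\,\|\phi\|_{L^p}\,t_2^{(1-2\alpha p')/p'},
\]
and this Beta integral is finite iff $\alpha p'<1$. Since $\alpha p'=\frac{d}{2p}\cdot\frac{p}{p-1}=\frac{d}{2(p-1)}$, the condition $\alpha p'<1$ is exactly $p>1+\frac d2$; this is the paper's observation that $(q-1)d/2<1$ with $q=p'$. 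A second application of H\"older in $t_2$ against $\phi(t_2)$ gives $c\,\|\phi\|_{L^p}^2\left(\int_0^1 t_2^{1-2\alpha p'}\,dt_2\right)^{1/p'}\leq c(p,d)\|g\|_p^2$, the last factor being finite under the same condition. With this replacement your proof becomes the paper's.
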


\begin{proof}We have
\[\IE\left(\int_0^1g(t,W(t))dt\right)^2=2\int_0^1dt\int_0^tds
\int_{\R^{2d}}g(s,\zeta)g(t,z)E(s,\zeta)E(t-s,z-\zeta)d\zeta dz\]
Now, if $q=\frac p{p-1}$ then $\int E(t,z)^qdz=O(t^{-(q-1)d/2})$ and
$p>1+\frac d2$ implies $(q-1)d/2<1$, so the result follows from
H\"{o}lder's inequality.
\end{proof}

\section{Proof of Theorem}\label{pf}

We now apply Corollary \ref{bc2} and Lemma \ref{bl3} to the proof of the
theorem. First we give a brief sketch of the proof.\vspace{.2cm}\\
{\bf Outline of proof.} The proof is motivated by the elementary case
when $f$ is Lipschitz in the second variable. In this case, if $I=[a,b]$
is a subinterval of [0,1] and $u$ is a solution of (\ref{eq4}) satisfying
\zz\label{eq5}|u(t)|\leq\alpha,\ \ \ \ \ t\in I\end{equation}
and $\beta=|u(a)|$, then we deduce from (\ref{eq5}) that
$|u(t)|\leq\alpha'=\beta+L|I|\alpha$ for $t\in I$, where $L$ is the
Lipschitz constant, i.e. (\ref{eq5}) holds with $\alpha$ replaced by $\alpha'$.
If $L|I|<1$ it follows that (\ref{eq5}) holds with $\alpha=(1-L|I|)^{-1}\beta$,
and of course if $\beta=0$ this gives $u=0$ on $I$.

We try to copy this argument using Corollary \ref{bc2} as a substitute for a
Lipschitz condition. There are two difficulties: first, Corollary \ref{bc2}
is a statement about probabilities and we need an `almost sure' version, and
in doing so we lose something; second, in Corollary \ref{bc2}, $x$ is a
constant, whereas we are dealing with a function $u$ depending on $t$.
The way round the second problem is to approximate $u$ by a sequence of
step functions $u_l$ and then use
\zz\label{aa}\begin{split}&\int_I\{f(W(t)+u(t))-f(W(t))\}dt=\lim_{l\rightarrow\infty}
\int_I\{f(W(t)+u_l(t))-f(W(t))\}dt\\
&=\int_I\{f(W(t)+u_n(t))-f(W(t))\}dt+\sum_{l=n}^\infty
\int_I\{f(W(t)+u_{l+1}(t))-f(W(t)+u_l(t))\}dt\end{split}\z
where $u_n$ is constant on the interval $I$, and then to apply the `almost
sure' form of the
proposition to each interval of constancy of the terms on the right. 
Again, we lose something in
doing this, but, as it turns out, we still have good enough estimates to
prove the theorem. In fact, we need two versions of the `almost sure'
(nearly) Lipschitz condition, the first to estimate
$\int\{f(W(t)+u_n(t))-f(W(t))\}dt$ and the second to estimate
$\int\{f(W(t)+u_{l+1}(t))-f(W(t)+u_l(t))\}dt$. We also need a third estimate, for sums of integrals of the second type.

The two versions of the `almost sure' nearly-Lipschitz condition are
conditions (\ref{eq7}) and (\ref{eq9}) below, and the third estimate is (\ref{eqx}). In Lemmas
\ref{l1}, \ref{l3}, \ref{nla} and \ref{nlb} it is shown that these conditions indeed hold almost surely.
Lemmas \ref{l4} and \ref{l5} establish a technical condition (\ref{eq8})
needed to justify the passage to the limit as $l\rightarrow
\infty$ (which is not trivial when $f$ is not continuous). With these
preliminaries the above programme is carried out in Lemma \ref{ll}. The
analogue of (\ref{eq5}) above is (\ref{eq10}). We no longer immediately get
$\alpha=0$ when $\beta=0$, but we get a good enough bound to prove the
uniqueness of the solution to (\ref{eq1}), for any $W$ satisfying
(\ref{eq7},\ref{eq9},\ref{eq8},\ref{eqx}).\vspace{.2cm}

We now turn to the details.\vspace{.2cm}

For any $n\geq0$ we can divide [0,1] into $2^n$ intervals
$I_{nk}=[k2^{-n},(k+1)2^{-n}]$, $k=0,1,2,\cdots,2^n-1$. We shall also
consider dyadic decompositions of $\R^d$, and say $x\in\R^d$ is a {\em
dyadic} point if each component of $x$ is rational with denominator a
power of 2. Let $Q=\{x\in\R^d:\|x\|\leq1\}$, where $\|x\|$ denotes the
supremum norm $\max_{1\leq j\leq d}|x_j|$. We also introduce the
notation
\[\sigma_{nk}(x)=\int_{I_{nk}}\{g(W(t)+x)-g(W(t))\}dt\]and
\[\rho_{nk}(x,y)=\sigma_{nk}(x)-\sigma_{nk}(y)=\int_{I_{nk}}\{g(W(t)+x)-
g(W(t)+y)\}dt\]Then we can state:

\begin{lemma}\label{l1}Let $g$ be a real function on $[0,1]\times\R^d$ with $|g(t,z)|\leq1$ everywhere. Then with probability 1 we can
find $C>0$ so that
\zz\label{eq7}|\rho_{nk}(x,y)|\leq C\left\{n^{1/2}+\left(\log^+\frac1{|x-y|}\right)^{1/2}\right\}2^{-n/2}|x-y|\z
for all dyadic $x,y\in Q$ and all choices of integers $n,k$ with $n>0$ and $0\leq k\leq2^n-1$.
\end{lemma}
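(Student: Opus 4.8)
The plan is to prove (\ref{eq7}) by a Kolmogorov-type chaining over dyadic grids in $\R^d$, using Corollary \ref{bc2} to control each increment. The input is the unconditional tail bound obtained from Corollary \ref{bc2} applied, on the interval $I_{nk}$, to the function $h(t,z)=g(t,z+y)$ (which again satisfies $|h|\leq1$): since $\rho_{nk}(x,y)=\int_{I_{nk}}\{h(t,W(t)+(x-y))-h(t,W(t))\}\,dt$ and $|I_{nk}|=2^{-n}$, we get
\[\IP\big(|\rho_{nk}(x,y)|\geq\lambda\,2^{-n/2}|x-y|\big)\leq 2e^{-\lambda^2/(2C^2)}\]
for every $\lambda>0$, with $C$ the constant of Proposition \ref{be}.

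For $m\geq0$ let $G_m$ be the set of points of $Q$ whose coordinates are integer multiples of $2^{-m}$; then $|G_m\cap Q|\leq C_d2^{md}$ and each point of $G_m$ has at most $3^d$ points of $G_m$ at sup-distance $\leq2^{-m}$. For each $n\geq1$, $0\leq k\leq2^n-1$, $m\geq0$ and each pair $x',x''\in G_m\cap Q$ with $\|x'-x''\|_\infty\leq2^{-m}$, consider the event $|\rho_{nk}(x',x'')|>(A(n+m))^{1/2}2^{-n/2}|x'-x''|$, where $A$ is a constant to be fixed; by the tail bound its probability is at most $2e^{-A(n+m)/(2C^2)}$, and the sum of these probabilities over all such configurations is at most
\[\sum_{n\geq1}\sum_{m\geq0}2^n\cdot C_d3^d2^{md}\cdot2e^{-A(n+m)/(2C^2)},\]
which converges once $A>2C^2d\log2$ (the double sum factors into two geometric series). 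Fixing such an $A$ and applying the Borel--Cantelli lemma, almost surely only finitely many of these events occur, so there is a random constant $C_1$ with $|\rho_{nk}(x',x'')|\leq C_1(n+m)^{1/2}2^{-n/2}|x'-x''|$ for \emph{all} admissible $n,k,m$ and all pairs $x',x''\in G_m\cap Q$ at sup-distance $\leq2^{-m}$ (for the finitely many exceptional configurations one enlarges $C_1$).

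Now chain. Given distinct dyadic $x,y\in Q$ with $\|x-y\|_\infty<1$ (the case $\|x-y\|_\infty\geq1$ being trivial since $|\rho_{nk}|\leq2^{1-n}$ and $|x-y|\geq1$), pick $m_0\geq0$ with $2^{-m_0-1}<\|x-y\|_\infty\leq2^{-m_0}$, and for dyadic $z$ write $z_m$ for its coordinatewise truncation to $m$ binary places, so $z_m\in G_m\cap Q$, $\|z-z_m\|_\infty<2^{-m}$, $\|z_m-z_{m-1}\|_\infty\leq2^{-m}$, and $z_m=z$ once $m$ reaches the level of $z$. Telescoping $\sigma_{nk}(x)-\sigma_{nk}(x_{m_0})$ over $m>m_0$ (a finite sum, each term an increment of the type above with parameter $m=j$), doing the same for $y$, and joining $x_{m_0}$ to $y_{m_0}$ (which are at sup-distance $\leq3\cdot2^{-m_0}$) by a path of at most $3d$ adjacent points of $G_{m_0}$, the increment bound gives
\[|\rho_{nk}(x,y)|\leq C_2\,2^{-n/2}\Big\{(n+m_0)^{1/2}2^{-m_0}+\sum_{j>m_0}(n+j)^{1/2}2^{-j}\Big\}.\]
Using $(n+j)^{1/2}\leq n^{1/2}+j^{1/2}$ and $\sum_{j>m_0}j^{1/2}2^{-j}\leq c\,2^{-m_0}(1+m_0^{1/2})$, the braced term is $\leq C_3\,2^{-m_0}(n^{1/2}+m_0^{1/2}+1)$. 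Finally $2^{-m_0}\leq2|x-y|$ and $m_0\leq\log_2(1/\|x-y\|_\infty)\leq\log_2(1/|x-y|)+\tfrac12\log_2d$; converting $\log_2$ to $\log$ and absorbing the resulting additive $d$-dependent constant into the $n^{1/2}$ term (since $n\geq1$) yields (\ref{eq7}).

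The main obstacle is the bookkeeping of the chaining: one must make the Gaussian exponent in the union bound grow like $n+m$ — fast enough to beat the $2^n2^{md}$ count of configurations at scale $(n,m)$, yet structured so that the telescoped sum $\sum_{j>m_0}(n+j)^{1/2}2^{-j}$ collapses to a quantity $\asymp\{n^{1/2}+(\log^+\frac1{|x-y|})^{1/2}\}|x-y|$, i.e. so that only a single logarithm, not a power of it, survives in the final modulus. This balance works precisely because of the extra gain $2^{-n/2}|x-y|$ already present in Corollary \ref{bc2}; the rest is routine estimation.
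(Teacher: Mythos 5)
Your proposal is correct and takes essentially the same route as the paper: apply Corollary \ref{bc2} (to the shifted function $h(t,z)=g(t,z+y)$, so that $\rho_{nk}$ becomes a $\sigma$-type increment) to get Gaussian tails at scale $2^{-n/2}|x-y|$, take a union bound over all dyadic grid neighbours at all scales $m$ and all intervals $I_{nk}$ with the threshold $\asymp(n+m)^{1/2}$ tuned so the geometric factor $2^n 2^{md}$ is beaten, and then chain via coordinatewise truncations to reach arbitrary dyadic $x,y$. The only cosmetic difference is that you invoke Borel--Cantelli to produce the random constant directly, whereas the paper argues "for each $\E$, with probability $>1-\E$ the bound holds with $\lambda(\E)$" and lets $\E\to0$; and you join $x_{m_0}$ to $y_{m_0}$ by a short grid path while the paper chooses $m$ one notch finer so that the two truncations are already dyadic neighbours or equal. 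Both details lead to the same estimate.
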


\begin{proof} Let us say that two dyadic points $x,y\in\R^d$ are {\em
dyadic neighbours} if for some integer $m\geq0$ we have $\|x-y\|=2^{-m}$
and $2^{-m}x,2^{-m}y\in\Z^d$. Then using the Corollary \ref{bc2} we have,
for any such pair $x,y\in Q$ and any $n,k$ that
\[\IP\left(|\rho_{nk}(x,y)|\geq\lambda(n^{1/2}+m^{1/2})2^{-m-n/2}\right)
\leq C_1e^{-C_2\lambda^2(n+m)}\]
and by summing over all possible choices of $n,k,m,x,y$ we find that the
probability that
\[|\rho_{nk}(x,y)|\geq\lambda(n^{1/2}+m^{1/2})2^{-m-n/2}\]
for some choice of $I_{nk}$ and dyadic neighbours $x,y\in Q$ is not more
than\\$\sum_{n=1}^\infty\sum_{m=0}^\infty 2^n3^d2^{d(m+3)}C_1e^{-C_2\lambda^2(1+m+n)}$
which approaches 0 as $\lambda\rightarrow\infty$.

It follows that, given $\E>0$, we can find $\lambda(\E)$ such that, with
probability $>1-\E$, we have
\[|\rho_{nk}(x,y)|<\lambda(1+n^{1/2}+m^{1/2})2^{-m-n/2}\]
for all choices of $n,k$ and dyadic neighbours in $Q$.

Next let $x,y$ be any two dyadic points in $Q$, and let $m$ be the
smallest non-negative integer such that $\|x-y\|<2^{-m}$. For $r\geq m$,
choose $x_r$ to minimise $\|x-x_r\|$ subject to $2^rx_r\in\Z^d$, and
$y_r$ similarly. Then $\|x_m-y_m\|=2^{-m}$ or 0, and for $r\geq m$,
$\|x_r-x_{r+1}\|=2^{-r-1}$ or 0. So $x_m,y_m$ are dyadic neighbours or
equal, and the same applies to $x_r,x_{r+1}$ and $y_r,y_{r+1}$. Then we
have
\[\rho_{nk}(x,y)=\rho_{nk}(x_m,y_m)+\sum_{r=m}^\infty\rho_{nk}(x_r,x_m)
+\sum_{r=m}^\infty\rho_{nk}(y_m,y_r)\]
(note that the sums are actually finite, since $x,y$ are dyadic, so that
$x=x_r$ and $y=y_r$ for large $r$). Then
applying the above bounds for the case of dyadic neighbours to each
term, we get the desired result.
\end{proof}

Next we prove a similar estimate for $\sigma_{nk}$, which is analogous to the Law of the Iterated Logarithm for Brownian motion.

\begin{lemma}\label{l3} With probability 1 there is a constant $C>0$ such that for all $n\in\mathbb{N}$, $k\in\{0,1,\cdots,2^n-1\}$
and dyadic $x\in Q$ we have
\zz\label{eq9}|\sigma_{nk}(x)|\leq Cn^{1/2}2^{-n/2}(|x|+2^{-2^n})\z
\end{lemma}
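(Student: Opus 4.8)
The plan is to follow the same scheme as the proof of Lemma \ref{l1}, but now using the single-point estimate from Corollary \ref{bc2} (applied to $\sigma_{nk}(x)$ itself rather than to a difference) together with a chaining argument that reduces a general dyadic $x\in Q$ to a telescoping sum over dyadic neighbours plus a controlled error. The key observation is that for a fixed dyadic point $x$ and a fixed interval $I_{nk}$, Corollary \ref{bc2} gives
\[\IP\bigl(|\sigma_{nk}(x)|\geq\lambda n^{1/2}2^{-n/2}|x|\bigr)\leq 2e^{-\lambda^2 n/(2C^2)},\]
so the tail decays like a power of $2$ raised to a multiple of $n$; this is what will let us absorb the $2^n$ choices of $k$ and the polynomially-many relevant dyadic points at each scale.

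First I would set up the discretisation: for each $n$ and each scale index $r$ with $0\le r\le 2^n$ (note the cutoff at $r=2^n$, which is what produces the $2^{-2^n}$ term), consider dyadic points $x$ with $2^r x\in\Z^d$ and $\|x\|\le 1$; there are at most $(2^{r+1}+1)^d$ of these. For such an $x$ and any $n,k$, Corollary \ref{bc2} bounds $\IP(|\sigma_{nk}(x)|\ge \lambda (n^{1/2}+r^{1/2})2^{-n/2}|x|)$ by $2\exp(-C_2\lambda^2(n+r))$ for suitable absolute $C_2$. Summing over $n\ge 1$, $0\le k\le 2^n-1$, $0\le r\le 2^n$, and the $O(2^{dr})$ points at scale $r$ gives a total of order
\[\sum_{n\ge1}\sum_{r\ge0} 2^n\, 2^{dr}\, e^{-C_2\lambda^2(n+r)},\]
which tends to $0$ as $\lambda\to\infty$. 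Hence, given $\E>0$, with probability $>1-\E$ we have $|\sigma_{nk}(x)|\le \lambda(n^{1/2}+r^{1/2})2^{-n/2}|x|$ simultaneously for all such $n,k,r$ and all scale-$r$ dyadic points $x\in Q$, for some $\lambda=\lambda(\E)$.

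Next I would do the chaining. Given an arbitrary dyadic $x\in Q$, write $x$ as a telescoping sum $x = x_{r_0} + \sum_{r\ge r_0}(x_{r+1}-x_r)$ where $x_r$ is the best scale-$r$ dyadic approximation to $x$; this is a finite sum since $x$ is dyadic. Here $r_0$ is the coarsest scale, essentially $r_0\approx\log^+(1/|x|)$, and one checks $|x_{r_0}|\lesssim |x|$ and $\|x_{r+1}-x_r\|\le 2^{-r-1}$. Crucially, to keep the number of terms bounded by $2^n$ (matching the sum range $0\le r\le 2^n$), we truncate the telescoping at $r=2^n$: the tail $x - x_{2^n}$ has norm $\le d\,2^{-2^n}$, and we bound $\sigma_{nk}(x)-\sigma_{nk}(x_{2^n})$ crudely using $|g|\le 1$ and $|I_{nk}|=2^{-n}$, giving a contribution $\le 2^{-n}\cdot d\,2^{-2^n}\lesssim 2^{-n/2}\cdot 2^{-2^n}$ (in fact much smaller), which is absorbed into the $2^{-2^n}$ term in \eqref{eq9}. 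Then I apply the single-point bound from the previous paragraph: $\sigma_{nk}(x_{r_0})$ is controlled by Corollary \ref{bc2} directly with $|x_{r_0}|\lesssim|x|$ and $r\approx\log^+(1/|x|)\lesssim n$ if $|x|$ is not too small (and the $2^{-2^n}$ term handles the remaining case), while each increment $\sigma_{nk}(x_{r+1})-\sigma_{nk}(x_r) = \rho_{nk}(x_{r+1},x_r)$ can either be estimated by the already-proven Lemma \ref{l1} (since $x_{r+1},x_r$ are dyadic neighbours, $|\rho_{nk}|\lesssim (n^{1/2}+r^{1/2})2^{-n/2}2^{-r}$) or, alternatively, directly via the difference estimate in the displayed bound above; summing the geometric series $\sum_{r\ge r_0}(n^{1/2}+r^{1/2})2^{-n/2}2^{-r}\lesssim n^{1/2}2^{-n/2}2^{-r_0}\lesssim n^{1/2}2^{-n/2}|x|$ closes the estimate. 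Finally, since this holds on an event of probability $>1-\E$ for every $\E$, it holds almost surely (with $C$ depending on the path).

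The main obstacle I anticipate is bookkeeping rather than any genuinely new idea: one must choose the scale cutoffs so that the summability of the probabilities holds (the factor $2^n$ from the choice of $k$ is dangerous and is exactly why the summation over $n$ needs the exponential $e^{-C_2\lambda^2 n}$ to beat it, forcing $\lambda$ large), and one must verify that truncating the dyadic expansion at scale $2^n$ simultaneously (a) keeps the number of chaining steps at most comparable to $2^n$ so the union bound survives and (b) leaves a tail error small enough to fit inside the $C n^{1/2}2^{-n/2}\cdot 2^{-2^n}$ slack term. The appearance of the unusual $2^{-2^n}$ correction in \eqref{eq9}, rather than something like $2^{-n}$, is precisely an artefact of this truncation choice, and getting the exponents to line up is the only delicate point.
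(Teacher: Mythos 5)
Your overall scheme --- a union bound over dyadic scales, chaining, and a $2^{-2^n}$ error term produced by restricting the union bound to scales $\leq 2^n$ --- is the right one, but there is a genuine gap in the chaining step, and it is precisely the reason the paper cannot simply reuse Lemma~\ref{l1} but has to rerun the union bound in a localized form. The claimed estimate
\[\sum_{r\geq r_0}(n^{1/2}+r^{1/2})2^{-n/2}2^{-r}\leq C\,n^{1/2}2^{-n/2}2^{-r_0}\]
is false for $r_0>n$: the left side is comparable to $(n^{1/2}+r_0^{1/2})2^{-n/2}2^{-r_0}$, so you lose a factor of order $(r_0/n)^{1/2}\approx(\log(1/|x|)/n)^{1/2}$. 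Since the lemma asserts a uniform $n^{1/2}$ (not $(\log(1/|x|))^{1/2}$) all the way down to $|x|$ of order $2^{-2^n}$, this loss is fatal --- near $|x|\approx 2^{-2^n}$ the extra factor is of order $2^{n/2}/n^{1/2}$, which cancels the entire $2^{-n/2}$ gain. The same defect appears in your anchor term: a union bound over all $O(2^{dr})$ scale-$r$ dyadic points of $Q$ forces the probability cost $(n^{1/2}+r^{1/2})$, so $\sigma_{nk}(x_{r_0})$ is only bounded by $(n^{1/2}+r_0^{1/2})2^{-n/2}|x_{r_0}|$. (A secondary problem: your tail bound $|\sigma_{nk}(x)-\sigma_{nk}(x_{2^n})|\leq 2^{-n}\cdot d\,2^{-2^n}$ does not follow from $|g|\leq1$, which only gives $2\cdot 2^{-n}$; $g$ is not Lipschitz. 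Working with dyadic $x$ makes the telescoping finite so no truncation of the chain is actually needed.)

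The paper's fix, which is the one decisive idea missing from your proposal, is to \emph{re-centre the entropy count at every scale}: it introduces the shrinking boxes $Q_r=\{x:\|x\|\leq 2^{-r}\}$, and for each $r$ applies Corollary~\ref{bc2} only to the pairs of $2^{-m}$-dyadic neighbours \emph{inside $Q_r$}. There are only $O(2^{(m-r)d})$ such pairs, so the union bound now costs $(n^{1/2}+(m-r)^{1/2})$ rather than $(n^{1/2}+m^{1/2})$. Chaining from the origin (using $\sigma_{nk}(0)=0$) within $Q_r$ then gives
\[|\sigma_{nk}(x)|\leq C\lambda\sum_{m\geq r}\bigl(n^{1/2}+(m-r)^{1/2}\bigr)2^{-m-n/2}=C\lambda\,2^{-r-n/2}\sum_{l\geq0}\bigl(n^{1/2}+l^{1/2}\bigr)2^{-l}\leq C'\lambda\,n^{1/2}2^{-r-n/2},\]
because $\sum_l l^{1/2}2^{-l}$ is an absolute constant absorbed into $n^{1/2}$. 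This gives $|\sigma_{nk}(x)|\leq Cn^{1/2}2^{-n/2}|x|$ when $|x|\approx 2^{-r}$, $1\leq r\leq 2^n$, and $Cn^{1/2}2^{-n/2}2^{-2^n}$ for $|x|<2^{-2^n}$ by taking $r=2^n$; only the outer index $r$ is cut off at $2^n$, while the scale index $m$ runs over all $m\geq r$, so no truncation of the chain is needed. The extra factor $2^n$ from summing over $r\in\{1,\ldots,2^n\}$ is absorbed, together with the $2^n$ from the choice of $k$, by the $e^{-c\lambda^2 n}$ tail. In short, a global union bound over all scale-$m$ points is too lossy; localizing to $Q_r$ so that $m^{1/2}$ becomes $(m-r)^{1/2}$ is exactly what makes the chaining telescope to $n^{1/2}$.
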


\begin{proof} For any integer $r\geq0$ we let $Q_r=\{x\in\R^d: \|x\|\leq2^{-r}\}$. Then if $m\geq r$ the
number of pairs $(x,y)$ of dyadic neighbours in $Q_r$ with
$\|x-y\|=2^{-m}$ is $\leq(9\times2^{m-r})^d$ and for each such pair we
have 
\[\IP(|\rho_{nk}(x,y)|\geq\lambda(n^{1/2}+\sqrt{m-r})2^{-m-n/2})\leq
C_1e^{-C_2\lambda^2(n+m-r)}\leq C_12^{2d(r-m)}e^{-C_2\lambda^2}e^{-n}\]
for $\lambda$ large. By summing over $n$, $1\leq r\leq2^n$ and $m\geq r$ and all pairs $(x,y)$, we deduce that, with
probability $\geq 1-C_3e^{-C_4\lambda^2}$, we have
$\rho_{nk}(x,y)\leq\lambda(n^{1/2}+\sqrt{m-r})2^{-r-n/2}$ for $n\in\mathbb{N}$, $1\leq r\leq n$ and $m\geq r$ and all pairs
$(x,y)$ of dyadic neighbours in $Q_r$ with $\|x-y\|=2^{-m}$, and then, by an argument similar
to Lemma \ref{l1}, we get for all $n$ and $1\leq r\leq n$ that $\sigma_{nk}(x)\leq C_5\lambda n^{1/2}2^{-r-n/2}$ for all dyadic
$x\in Q_r$. The required result follows.
\end{proof}

The next two lemmas are used to justify the passage to the limit $l\rightarrow\infty$ in (\ref{aa}).

Let $\Phi$ denote the set of $Q$-valued functions $u$ on [0,1] satisfying
$|u(s)-u(t)|\leq|s-t|$, $s,t\in[0,1]$, and let $\Phi_n$ denote the set of
$Q$-valued functions on [0,1] which are constant on each $I_{nk}$ and
satisfy $|u(k2^{-n})-u(l2^{-n})|\leq|k-l|2^{-n}$. Then let $\Phi^*=\Phi
\cup\cup_n\Phi_n$.

\begin{lemma}\label{l4} Given $\E>0$, we can find $\eta>0$ such that if
$U\subset(0,1)\times\R^d$ is open with $|U|<\eta$, then, with
probability $\geq1-\E$, we have $\int_0^1\chi_U(t,W(t)+u(t))dt
\leq\E$ for all $u\in\Phi^*$.
\end{lemma}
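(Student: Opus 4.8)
The plan is to control, uniformly over the (infinite-dimensional but equicontinuous) family $\Phi^*$, the occupation measure of the random path $t\mapsto(t,W(t)+u(t))$ of the set $U$, and to do so by a covering argument that reduces the supremum over $u\in\Phi^*$ to a countable supremum over piecewise-constant paths with dyadic values. First I would fix $\E>0$ and observe that it suffices to prove the estimate for $u\in\bigcup_n\Phi_n$: any $u\in\Phi$ can be uniformly approximated by its discretisations $u_n\in\Phi_n$, and since $U$ is open, $\chi_U(t,W(t)+u(t))\le\liminf_n\chi_U(t,W(t)+u_n(t))$ pointwise, so Fatou gives the bound for $\Phi$ once we have it (uniformly in $n$) for the $\Phi_n$. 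Replacing the values in $Q$ by nearby dyadic points costs an arbitrarily small perturbation of the path, and since $U$ can be slightly enlarged to an open set $U'$ still of small measure while controlling $\chi_U\le\chi_{U'}$ on a small neighbourhood, I may assume $u$ takes dyadic values on each $I_{nk}$.

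The key quantitative input is Lemma \ref{bl3}: for $p>1+\frac d2$ and $g\in L^p([0,1]\times\R^d)$ one has $\IE\bigl(\int_0^1 g(t,W(t))\,dt\bigr)^2\le c(p,d)\|g\|_p^2$. Applied with $g=\chi_U$ this gives $\IE\bigl(\int_0^1\chi_U(t,W(t))\,dt\bigr)^2\le c(p,d)|U|^{2/p}$, which is small when $|U|$ is small. To upgrade this from the fixed path $W$ to $W+u$, note that for $u$ constant on the intervals $I_{nk}$ the integral splits as $\sum_{k=0}^{2^n-1}\int_{I_{nk}}\chi_U(t,W(t)+u(k2^{-n}))\,dt$; on each subinterval, conditionally on $\mathcal F_{k2^{-n}}$, the increment $W(t)-W(k2^{-n})$ is a Brownian motion and $u(k2^{-n})$ is (after the discretisation) one of a controlled finite list of dyadic points, so a rescaled form of Lemma \ref{bl3} bounds the conditional second moment of each block by $c(p,d)\,2^{-n(1-1/p)}|U_{nk}|^{1/p}$ where $U_{nk}$ is the relevant slice of $U$. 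Summing over $k$, using $\sum_k|U_{nk}|\le|U|$ and Cauchy–Schwarz in $k$, yields a bound of the form $\IE\bigl(\int_0^1\chi_U(t,W(t)+u(t))\,dt\bigr)^2\le c'(p,d)|U|^{2/p}$, uniformly in $n$ and in the piecewise-constant $u$.

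From here a maximal/union-bound argument finishes it: fix $n$, and for each of the finitely many admissible dyadic step functions $u\in\Phi_n$ whose values lie in $Q$ and satisfy the Lipschitz constraint $|u(k2^{-n})-u(l2^{-n})|\le|k-l|2^{-n}$ (there are at most $\exp(O(n))$ of these, since each successive value moves by a bounded dyadic amount), Chebyshev gives $\IP\bigl(\int_0^1\chi_U(t,W(t)+u(t))\,dt>\E\bigr)\le \E^{-2}c'(p,d)|U|^{2/p}$. Multiplying by the count $\exp(O(n))$ and summing over $n$, the total is $\le \E^{-2}c'(p,d)|U|^{2/p}\sum_n e^{O(n)}$, which diverges — so I must sharpen the per-$u$ bound to beat the counting. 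The remedy, and the step I expect to be the main obstacle, is to make the moment estimate exponential rather than polynomial: instead of the second moment, control $\IE\exp\bigl(\theta\int_{I_{nk}}\chi_U(t,W(t)+u)\,dt\bigr)$ on each block of length $2^{-n}$ for a suitable $\theta\sim 2^{n}$, which is possible because on such a short interval the occupation integral is at most $2^{-n}$ and its exponential moment can be bounded using Lemma \ref{bl3} together with a Borel–Cantelli/concentration argument in the spirit of Corollary \ref{bc2}. Multiplying these conditional exponential-moment bounds across the $2^n$ blocks, one obtains $\IP\bigl(\int_0^1\chi_U(t,W(t)+u(t))\,dt>\E\bigr)\le \exp\bigl(-c\,2^n\E + C\,2^n|U|^{1/p}\bigr)$ for each fixed $u$; choosing $|U|<\eta$ small enough makes the exponent $\le -c'2^n n$, which dominates the $\exp(O(n))$ count, and summing over $n$ and over $u$ gives probability $<\E$ as required. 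Then the reduction of the first paragraph transports the bound back to all of $\Phi^*$, completing the proof.
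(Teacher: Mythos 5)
Your strategy of union-bounding over all step functions in $\Phi_n$ runs into exactly the counting obstruction you identify, and the proposed exponential-moment fix does not close the gap. To get $\IP\bigl(\int_0^1\chi_U(t,W(t)+u(t))\,dt>\E\bigr)\le\exp(-c\,2^n\E+C\,2^n|U|^{1/p})$ by multiplying conditional exponential moments over blocks, you need a \emph{conditional} bound on $\IE\bigl(\exp(\theta\int_{I_{nk}}\chi_U(t,W(t)+u_k)\,dt)\mid\mathcal F_{k2^{-n}}\bigr)$ that is uniform in the (random) starting point $W(k2^{-n})$. But Lemma \ref{bl3} gives only an unconditional second moment for a Brownian motion started at the origin; conditionally, $W$ could be sitting exactly where $U$ concentrates, and then the occupation integral over that block can be as large as $2^{-n}$ with conditional probability close to $1$. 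Nothing in Lemma \ref{bl3} or Corollary \ref{bc2} (which concerns differences $g(\cdot+x)-g(\cdot)$, not single occupation integrals) rules this out, so the claimed per-$u$ exponential tail is unsubstantiated. The step you flag as ``the main obstacle'' is indeed where the argument breaks.

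The paper avoids the counting problem altogether by a two-scale argument. It fixes one scale $m$ (chosen so that $\sum_{n\ge m}n^{1/2}2^{-n/2}$ is small) and a single \emph{finite} net $\Omega$ of dyadic points in $Q$ at scale $2^{-m}$. Lemma \ref{bl3} is used only to show that, with high probability, $\int_{I_{mk}}\phi(t,W(t)+x)\,dt$ is small simultaneously for the finitely many $(k,x)$ with $x\in\Omega$ -- a union bound over a fixed finite set, not over step functions. The passage from the net to an arbitrary $u\in\Phi^*$ is then done pathwise using the estimate of Lemma \ref{l1} (the bound (\ref{eq55})): that lemma supplies, almost surely, a uniform modulus of continuity $|\rho_{nk}(x,y)|\lesssim n^{1/2}2^{-3n/2}$ over \emph{all} dyadic pairs at distance $\lesssim2^{-n}$ and all $n,k$, so one can telescope $\int_0^1\phi(t,W(t)+u_n(t))\,dt$ across scales $n=m,m+1,\dots$ with geometrically summable error, landing within $\E$ of the value on the net. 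No enumeration of step functions, and hence no exponential count, ever enters. This chaining-via-Lemma-\ref{l1} step is the key idea your proposal is missing; without it, or some genuine substitute for the conditional exponential tail you posit, the argument does not go through.
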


\begin{proof} Fix $\E>0$. By Lemma \ref{l1} we can find $K$ such that, for
any Borel function $\phi$ on $[0,1]\times\R^d$ with $|\phi|\leq1$ everywhere
we have with probability $>1-\E/2$ that
\zz\label{eq55}\int_{I_{kn}}\{\phi(W(t)+x)-\phi(W(t)+y)\}dt\leq Kn^{1/2}
2^{-3n/2}\z for all pairs of dyadic points $x,y$ in $Q$ and all choices of
$n,k$. Then we choose $m$ such that $4K\sum_{n=m}^\infty n^{1/2}2^{-n/2}<\E$.
Let $\Omega$ be a finite set of dyadic points of $Q$ such that every $x\in Q$
is within distance $2^{-m}$ of some point of $\Omega$.

Provided $\delta$ is chosen small enough, any bounded Borel function
$\phi$ on $[0,1]\times\R^d$ with
$\|\phi\|_{L^p([0,1]\times\R^d)}<\delta$ will satisfy \[\IP\left(\left|
\int_{I_{mk}}\phi(t,W(t)+x)dt\right|\geq2^{-m}\E/4\right)<\frac\E{2^{m+1}
\#(\Omega)}\]
for each $k,x$. Then the probability that
\zz\label{eq6}\left|\int_{I_{mk}}\phi(t,W(t)+x)dt\right|<2^{-m}\E/4\ \ {\rm
for}\ {\rm every}\ \ k\in\{0,1,\cdots,2^m-1\},\ \ x\in\Omega\end{equation}
is at least $1-\E/2$.

Now let $\eta=\delta^p$, and suppose $U$ is open with $m(U)<\eta$. Let
$(\phi_r)$ be an increasing sequence of continuous non-negative functions
on $[0,1]\times\R^d$, converging pointwise to $\chi_U$. Note that
then $\|\phi_r\|_{L^p([0,1]\times\R^d}<\delta$. For each $r$ define
events $A_r$: (\ref{eq6}) holds for $\phi=\phi_r$ and $B_r$: (\ref{eq55})
holds for $\phi=\phi_r$. Then $\IP(A_r)\geq1-\E/2$ and
$\IP(B_r)\geq1-\E/2$. Also, when $A_r$ and $B_r$ both hold, we have
$\int_{I_{km}}\phi_r(t,W(t)+x)dt<2^{-m}\E/2$ for all $x$ such that
$|x|\leq2$.

Now let $u\in\Phi^*$. For each $n\geq m$ choose $u_n\in\Phi_n$ taking
a constant dyadic value within $2^{-n}$ of $u(k2^{-n})$ on
$I_{nk}$ for $k=0,1,\cdots,2^n-1$. Now if $A_r$ and $B_r$ hold then
$\int_0^1\phi_r(t,W(t)+u_m(t))dt\leq\E/2$ and
\[\left|\int_0^1\{\phi_r(t,W(t)+u_n(t))-\phi_r(t,W(t)+u_{n+1}(t))\}dt
\right|\leq Kn^{1/2}2^{-n/2}\] from which it follows that
$\int_0^1\phi_r(t,W(t)+u(t))dt<\E$. So if we define the event $Q_r:$
$\int_0^1\phi_r(t,W(t)+u(t))dt\leq\E$ for all $u\in\phi$, then we have
$\IP(Q_r)\geq1-\E$.
But since $\phi_{r+1}\geq\phi_r$ we have $Q_{r+1}\subseteq Q_r$, and it
follows that with probability $\geq1-\E$ we have $Q_r$ for all $r$, from
which the result follows, since
$\int_0^1\phi_r(t,W(t)+u(t))dt\rightarrow\int_0^1\chi_U(t,W(t)+u(t))dt$
by the bounded convergence theorem.
\end{proof}

\begin{lemma}\label{l5} If $g$ is a bounded Borel function on
$[0,1]\times\R^d$, then, with probability 1, whenever $(u_n)$ is a
sequence in $\Phi^*$ converging pointwise to a limit $u\in\Phi^*$, we have
\zz\label{eq8}\int_0^1g(t,W(t)+u_n(t))dt\rightarrow\int_0^1g(t,W(t)+u(t))dt\z
\end{lemma}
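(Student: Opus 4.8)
The statement to prove is Lemma \ref{l5}: for a bounded Borel $g$, almost surely, whenever $(u_n)\subset\Phi^*$ converges pointwise to $u\in\Phi^*$, we have $\int_0^1g(t,W(t)+u_n(t))\,dt\to\int_0^1g(t,W(t)+u(t))\,dt$. The idea is that the only obstruction to such convergence is the failure of dominated/bounded convergence caused by the discontinuities of $g$: if $g$ were continuous the result would be immediate from pointwise convergence and boundedness. So I would first reduce to the case where $g=\chi_U$ is the indicator of an open set, or more precisely handle the general $g$ by approximating it in $L^p$ (with $p>1+d/2$) by continuous functions and controlling the error by an indicator of a small open set, which is exactly the situation governed by Lemma \ref{l4}.

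More concretely, here is how I would carry it out. By subtracting off a continuous approximation, write $g=h+e$ where $h$ is continuous, $\|e\|_{L^p([0,1]\times\R^d)}$ is as small as we like, and $|e|$ is bounded (one can arrange $|e|\le 2\|g\|_\infty$). For the continuous part $h$, since $W(t)+u_n(t)\to W(t)+u(t)$ for every $t$ and $h$ is continuous and bounded, the bounded convergence theorem gives $\int_0^1 h(t,W(t)+u_n(t))\,dt\to\int_0^1 h(t,W(t)+u(t))\,dt$ surely. For the error term $e$, I would fix $\E>0$, apply Lemma \ref{l4} to an open set $U\supset\{|e|>0\}$ (thickening the support of $e$ slightly so that $U$ is open and $|U|<\eta$, which is possible since $\|e\|_p$ small forces the measure of $\{|e|\ge\delta'\}$ to be small, and the remaining part contributes little uniformly) to conclude that with probability $\ge 1-\E$ we have $\int_0^1|e(t,W(t)+v(t))|\,dt\le C\E$ simultaneously for \emph{all} $v\in\Phi^*$ — in particular for all the $u_n$ and for $u$. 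Combining the two parts, on this event the limsup of $|\int g(t,W(t)+u_n(t))\,dt-\int g(t,W(t)+u(t))\,dt|$ is at most $2C\E$, uniformly over all such sequences. Letting $\E\to0$ along a sequence and taking the union of the (full-measure) events gives the claim.

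The one point requiring care is that Lemma \ref{l4} controls $\int_0^1\chi_U(t,W(t)+u(t))\,dt$ for an \emph{open} $U$, whereas $e$ is merely bounded and small in $L^p$, not an indicator. The fix is to split $e$ as $e=e\mathbf{1}_{\{|e|\le\delta'\}}+e\mathbf{1}_{\{|e|>\delta'\}}$; the first piece has sup-norm $\le\delta'$ so contributes at most $\delta'$ to the integral for any $v$; the second piece is supported on a set of measure $\le\delta'^{-p}\|e\|_p^p$, which we can make $<\eta$, and we enclose that set in an open $U$ of measure $<\eta$, then bound $|e\mathbf{1}_{\{|e|>\delta'\}}|$ by $2\|g\|_\infty\chi_U$ and invoke Lemma \ref{l4}. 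Choosing $\delta'$ and then $\|e\|_p$ appropriately makes everything $\le\E$.

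**Main obstacle.** I expect the main obstacle to be precisely the bookkeeping in the last paragraph: one must choose the continuous approximation $h$, the threshold $\delta'$, and the size of $\|e\|_p$ in the right order so that the $L^p$-smallness translates into a genuinely \emph{uniform} (over $\Phi^*$, hence over all admissible sequences $(u_n)$) smallness of the error integral, and so that the single exceptional null set works for all $\E$ simultaneously. The role of $\Phi^*$ being closed under the relevant pointwise limits (so that both $u_n$ and $u$ lie in it) is what makes the uniform bound from Lemma \ref{l4} applicable to every term of the sequence and its limit at once; once that is recognized the rest is routine. There is no hard estimate here beyond what Lemma \ref{l4} already provides — the difficulty is entirely in assembling the pieces cleanly.
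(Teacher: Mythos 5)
Your approach is essentially the same as the paper's: approximate $g$ by a bounded continuous $h$, dispose of $h$ by bounded convergence, and control the error term uniformly over $\Phi^*$ via Lemma~\ref{l4}, then send $\E\to0$. The only real difference is that the paper applies Lusin's theorem directly to obtain $h$ with $g=h$ outside an open $U$ of measure $<\eta$, so $g-h$ is automatically bounded by $2\|g\|_\infty\chi_U$ and Lemma~\ref{l4} applies at once; your detour through an $L^p$-small remainder $e$ followed by the threshold split $e=e\mathbf 1_{\{|e|\le\delta'\}}+e\mathbf 1_{\{|e|>\delta'\}}$ is an unnecessary reconstruction of the same small-measure open set (and, as stated, needs some care since a bounded Borel $g$ on $[0,1]\times\R^d$ need not lie in $L^p$ — Lusin sidesteps this).
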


\begin{proof} Given $\E>0$, let $\eta$ be as in Lemma \ref{l4}, and let $h$ be a
bounded continuous function on $[0,1]\times\R^d$ such that $g=h$ outside
an open set $U$ with $m(U)<\eta$. With probability $\geq1-\E$, the
conclusion of Lemma \ref{l4} holds, which means that for any convergent
sequence $(u_n)$ in $\Phi$ we have $\int_0^1\mathbb{I}_U(t,W(t)+u_n(t))dt
\leq\E$, and the same for the limit $u(t)$, so, if $M$ is an upper bound for
$|g-h|$, we have the bound $\left|\int\{g(t,W(t)+u_n(t))-h(t,W(t)+u_n(t))\}dt
\right|\leq M\E$, and the same for $x$ in place of $u_n$. Also, since
$h$ is continuous, $\int_0^1h(t,W(t)+u_n(t))dt\rightarrow
\int_0^1h(t,W(t)+u(t)dt$. It follows that, for $n$ large enough,
$\left|\int_0^1g(t,W(t)+u_n(t))dt-\int_0^1g(t,W(t)+u(t)dt\right|<(2M+1)\E$,
and, since this holds for any $\E>0$, the result follows.
\end{proof}

Note that Lemma \ref{l5} implies that $\rho_{nk}(x,y)$ and $\rho_{nk}(x)$ are continuous, so that 
the estimates of Lemmas \ref{l1} and \ref{l3} will hold for all $x,y\in Q$.

We also need a stronger bound for sums of $\rho_{nk}$ terms than that given by the bounds for individual terms in Lemma \ref{l1}, and
the next two lemmas provide this. They are motivated by the idea that any solution of (\ref{eq4}) should satisfy the approximate
equation $u((k+1)2^{-n})\approx u(k2^{-n})+\sigma_{nk}(u(k2^{-n}))$ which suggests that on a short time interval a solution can
be approximated by an `Euler scheme' $x_{k+1}=x_k+\sigma_{nk}(x_k)$.

\begin{lemma}\label{nla} Given even $p\geq2$ we can find $C>0$ such that, for any choice of $n,r\in\mathbb{N}$ with $r\leq2^{n/2}$,
$k\in\{0,1,\cdots,2^n-r\}$ and $x_0\in Q$, if we define $x_1,\cdots,x_r$ by the recurrence relation $x_{q+1}=x_q+\sigma_{n,k+q}(x_q)$, then
\[\IP\left(\sum_{q=1}^r|\rho_{n,k+q}(x_{q-1},x_q)|\geq2^{-n}\left\{C\sum_{q=0}^{r-1}|x_q|+\lambda r^{1/2}|x_0|\right\}\right)\leq
C\lambda^{-p}\]
for any $\lambda>0$.
\end{lemma}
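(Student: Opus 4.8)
The plan is to estimate the $p$-th moment of the random variable
\[Z = \sum_{q=1}^r|\rho_{n,k+q}(x_{q-1},x_q)| - C2^{-n}\sum_{q=0}^{r-1}|x_q|\]
and then apply Markov's inequality. The key point is that each increment $x_q - x_{q-1} = \sigma_{n,k+q-1}(x_{q-1})$ is measurable with respect to $\mathcal{F}_{(k+q-1+1)2^{-n}} = \mathcal{F}_{(k+q)2^{-n}}$ (by Lemma \ref{l5} the maps $\sigma_{nk}$ are continuous, so the recurrence makes sense pointwise), while $\rho_{n,k+q}(x_{q-1},x_q)$ is an integral over $I_{n,k+q}$ and so, conditionally on $\mathcal{F}_{(k+q)2^{-n}}$, it is governed by the conditional bound (\ref{bp}): given the value of the displacement $|x_{q-1}-x_q| = |\sigma_{n,k+q-1}(x_{q-1})|$, one has
\[\IE\bigl(|\rho_{n,k+q}(x_{q-1},x_q)|^p \mid \mathcal{F}_{(k+q)2^{-n}}\bigr) \leq C^p 2^{-np/2}(p/2)!\,|x_{q-1}-x_q|^p.\]
This is the crucial structural fact: the martingale-difference-like structure (each term's conditional moment controlled by a quantity that is already measurable) is what makes the sum behave like a sum of roughly independent pieces.

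First I would split $\rho_{n,k+q}(x_{q-1},x_q)$ into its conditional mean $\mu_q := \IE(\rho_{n,k+q}(x_{q-1},x_q)\mid \mathcal{F}_{(k+q)2^{-n}})$ and the fluctuation $\xi_q := \rho_{n,k+q}(x_{q-1},x_q) - \mu_q$. For the means, I would use the case $p=2$ bound together with $|x_{q-1}-x_q| \leq 2^{-n}\cdot(\text{something})$ — in fact $|x_q - x_{q-1}| = |\sigma_{n,k+q-1}(x_{q-1})| \leq 2^{-n}$ trivially since $|g|\leq 1$, but more usefully, Lemma \ref{l3} type control or just the crude bound gives $|\mu_q| \leq C 2^{-n/2}\cdot 2^{-n/2}|x_{q-1}| \cdot(\cdots)$; the point is that $\sum_q|\mu_q|$ can be absorbed into the $C2^{-n}\sum|x_q|$ term on the right-hand side (using $|x_q-x_{q-1}|\le C2^{-n}|x_{q-1}|$-type estimates coming from the structure, or bounding $|\mu_q|$ directly in terms of $|x_{q-1}|$). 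For the fluctuations $\xi_q$, the sequence $(\sum_{q\le j}\xi_q)_j$ is a martingale with respect to the filtration $(\mathcal{F}_{(k+q+1)2^{-n}})_q$, and I would apply the Burkholder–Davis–Gundy / Rosenthal inequality for martingales: the $p$-th moment of $\sum\xi_q$ is controlled by $\IE(\sum_q \IE(\xi_q^2\mid\cdot))^{p/2}$ plus $\sum_q\IE|\xi_q|^p$. Using the conditional moment bounds, $\IE(\xi_q^2\mid\cdot) \leq C^2 2^{-n}|x_{q-1}-x_q|^2 \le C^2 2^{-2n}|x_{q-1}|^2\cdot(\cdots)$, and summing gives a quadratic variation bounded by $C 2^{-2n}\cdot r\cdot(\max|x_q|)^2$ or, after a more careful telescoping, by something comparable to $2^{-2n}r|x_0|^2$ (this is where $r\le 2^{n/2}$ is needed, so that the growth of the $|x_q|$ stays controlled by $|x_0|$). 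This yields $\IE Z_+^p \le C^p 2^{-np}r^{p/2}|x_0|^p$ modulo the absorbed terms, and Markov gives the claimed $C\lambda^{-p}$.

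The main obstacle will be the bookkeeping that lets one replace $\sum_q |x_{q-1}-x_q|^2$-type quantities by $r|x_0|^2$: a priori the iterates $x_q$ could grow, and one needs that under the recurrence $x_{q+1}=x_q+\sigma_{n,k+q}(x_q)$ the norms $|x_q|$ stay comparable to $|x_0|$ over the $r \le 2^{n/2}$ steps, which requires feeding in Lemma \ref{l1} (to bound $|x_q - x_{q-1}| = |\sigma_{n,k+q-1}(x_{q-1})| \le |\sigma_{n,k+q-1}(0)| + |\rho_{n,k+q-1}(x_{q-1},0)|$) — but $\sigma_{n,k+q-1}(0)=0$, so in fact $|x_q-x_{q-1}| = |\rho_{n,k+q-1}(x_{q-1},0)| \le C n^{1/2} 2^{-n/2}\cdot 2^{-n/2}|x_{q-1}| = Cn^{1/2}2^{-n}|x_{q-1}|$ on the high-probability event of Lemma \ref{l1}, and over $r\le 2^{n/2}$ steps the product $\prod(1+Cn^{1/2}2^{-n})$ stays bounded. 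The other delicate point is matching constants: the factor $C$ appearing both as the coefficient of $\sum|x_q|$ and in the probability bound $C\lambda^{-p}$ must be chosen consistently and depends on $p$, which is harmless since $p$ is fixed. One subtlety to watch is that Lemma \ref{l1}'s event and the conditional-moment estimates must be combined carefully — I would condition on the Lemma \ref{l1} event where needed but estimate the martingale moments unconditionally, handling the (small) complementary probability separately, or alternatively absorb the logarithmic correction from Lemma \ref{l1} into the constant $C$ since we only need a polynomial-in-$\lambda$ tail here, not a Gaussian one.
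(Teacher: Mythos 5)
Your proposal correctly identifies the key structural ideas — the conditional moment bound (\ref{bp}), the martingale-difference structure with respect to $(\mathcal{F}_{(k+q)2^{-n}})_q$, and Burkholder plus Markov — and this is indeed the paper's route. However there are two genuine gaps in the execution. First, you decompose $\rho_{n,k+q}(x_{q-1},x_q)$ itself into a conditional mean $\mu_q$ and fluctuation $\xi_q$, and then invoke Burkholder to control $\IE\bigl|\sum_q\xi_q\bigr|^p$. But the quantity you need to control is $\sum_q|\rho_{n,k+q}(x_{q-1},x_q)|$, a sum of absolute values, and the best you get from your decomposition is $\sum_q|\rho_q|\leq\sum_q|\mu_q|+\sum_q|\xi_q|$; the martingale moment bound controls $\bigl|\sum_q\xi_q\bigr|$, not $\sum_q|\xi_q|$, and these differ drastically (Minkowski on $\sum_q|\xi_q|$ gives order $r$, not the needed $r^{1/2}$). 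The paper sidesteps this by decomposing the absolute value directly: set $Y_q=|\rho_{n,k+q}(x_{q-1},x_q)|$, $Z_q=\IE(Y_q|\mathcal{F}_q)$, $X_q=Y_q-Z_q$; then $(X_q)$ is a genuine martingale-difference sequence for $\sum_q Y_q$. Second, the conditional mean $Z_q$ (your $|\mu_q|$, morally) is \emph{not} deterministically bounded by $C2^{-n}|x_{q-1}|$ and so cannot simply be absorbed: the bound $Z_q\leq C2^{-n/2}|\sigma_{n,k+q-1}(x_{q-1})|$ still involves the random quantity $\sigma_{n,k+q-1}(x_{q-1})$, which is only of size $2^{-n/2}|x_{q-1}|$ in a conditional $L^p$ sense. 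The paper therefore applies a \emph{second} martingale decomposition, $Z_q=W_q+V_q$ with $V_q=\IE(Z_q|\mathcal{F}_{q-1})$, obtaining the deterministic absorbable bound $|V_q|\leq C2^{-n}|x_{q-1}|$ and handling $\sum_q W_q$ by Burkholder once more. Your sketch does not anticipate this second layer, and without it the "absorption'' step is not justified.

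A third, smaller point: to keep the iterates $|x_q|$ comparable to $|x_0|$ you invoke Lemma \ref{l1}, which is an almost-sure statement whose constant is a random variable. Mixing an "on the Lemma \ref{l1} event'' restriction with the conditional moment calculus of (\ref{bp}) is awkward, since that event is not adapted and the constant is not fixed. The paper instead controls the growth via the clean moment bound $\IE|x_{q+1}|^p\leq(1+C_1^{1/p}2^{-n/2})^p\IE|x_q|^p$, so that $\IE|x_q|^p\leq C_2|x_0|^p$ for $q\leq r\leq2^{n/2}$; this plugs directly into the unconditional Burkholder estimates and avoids the conditioning subtlety.
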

\begin{proof} We use $C_1,\cdots$ to denote constants which depend only on $d$ and $p$. We write ${\mathcal F}_j$ for ${\mathcal F}
_{(k+j)2^{-n}}$. Note first that $x_q$ is ${\mathcal F}_q$ measurable and $\IE(|\sigma_{n,k+q}(x_q)|^p|{\mathcal F}_q)\leq
C_12^{-np/2}|x_q|^p$ by (\ref{bp}). Hence $\IE|\sigma_{n,k+q}(x_q)|^p\leq C_12^{-np/2}\IE|x_q|^p$. It follows that
$\IE|x_{q+1}|^p\leq(1+C^{1/p}_12^{-n/2})^p\IE|x_q|^p$ and so
\zz\label{nq1}\IE|x_q|^p\leq(1+C^{1/p}_12^{-n/2})^p|x_0|^p\leq C_2|x_0|^p\z
for $1\leq q\leq r$.

Now let $Y_q=|\rho_{n,k+q}(x_{q-1},x_q)|$, $Z_q=\IE(Y_q|{\mathcal F}_q)$ and $X_q=Y_q-Z_q$. Then $X_q$ is ${\mathcal F}_{q+1}$ measurable
and $\IE(X_q|{\mathcal F}_q)=0$ so by Burkholder's inequality
\[\begin{split}\IE|\sum_{q=1}^rX_q|^p&\leq C_3\IE(\sum X_q^2)^{p/2}\leq C_3r^{p/2-1}\IE\sum|X_q|^p\leq C_4r^{p/2-1}\sum\IE(Y_q^p)\\
&\leq C_5r^{p/2-1}2^{-np/2}\sum\IE|x_q-x_{q-1}|^p=C_5r^{p/2-1}2^{-np/2}\sum\IE|\sigma_{n,k+q-1}(x_{q-1})|^p\\
&\leq C_6r^{p/2-1}2^{-np}\sum_{q=1}^r\IE|x_{q-1}|^p\end{split}\]
from which we deduce using (\ref{nq1}) that
\zz\label{nq2}\IE|\sum_{q=1}^rX_q|^p\leq C_7r^{p/2}2^{-np}|x_0|^p\z
Also let $V_q=\IE(Z_q|{\mathcal F}_{q-1})$ and $W_q=Z_q-V_q$. Noting that $Z_q\leq C_82^{-n/2}\sigma_{n,q-1}(x_{q-1})$ we get in a
similar way that
\zz\label{nq3}\IE|\sum W_q|^p\leq C_9r^{p/2}2^{-np}|x_0|^p\z
We also have 
\zz\label{nq4}|V_q|\leq C_{10}2^{-N}|X_{q-1}|\z
Now $Y_q=X_q+W_q+V_q$. By (\ref{nq2}) and (\ref{nq3}) we have $\IP(|\sum_{q=1}^r(X_q+W_q)|>2^{-n}\lambda r^{1/2}|x_0|)\leq C_{11}\lambda^p$
and the result then follows by (\ref{nq4}).
\end{proof}

\begin{lemma}\label{nlb} With probability 1 there exists $C>0$ such that for any $n,r\in{\mathbb N}$ with $r\leq2^{n/4}$,
any $k\in\{0,1,\cdots,2^n-r\}$ and any $y_0,\cdots,y_r\in Q$ we have
\zz\label{eqx}\sum_{q=1}^r|\rho_{n,k+q}(y_{q-1},y_q)|\leq C\left(2^{-3n/4}|y_0|+2^{-n/4}\sum_{q=0}^{r-1}|\gamma_q|+2^{-2^{n/2}}\right)\z
where $\gamma_q=y_{q+1}-y_q-\sigma_{n,k+q}(y_q)$.
\end{lemma}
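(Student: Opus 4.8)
The plan is to deduce the pathwise bound (\ref{eqx}) from the probabilistic bound of Lemma \ref{nla} by a Borel--Cantelli argument, after first reducing the arbitrary sequence $y_0,\dots,y_r$ to the deterministic Euler trajectory produced by the $\sigma_{n,k+q}$. The key observation is that the $\gamma_q$ measure exactly the failure of $(y_q)$ to be an Euler trajectory: if we set $x_0=y_0$ and define $x_{q+1}=x_q+\sigma_{n,k+q}(x_q)$, then $y_q$ and $x_q$ differ by a telescoping sum of the $\gamma$'s together with differences of $\sigma$-increments, which the bilinear estimate (\ref{eq7}) of Lemma \ref{l1} controls. So the first step is to estimate $|y_q-x_q|$ and then $|\rho_{n,k+q}(y_{q-1},y_q)-\rho_{n,k+q}(x_{q-1},x_q)|$ in terms of the $|\gamma_i|$ and the quantities already bounded; here one uses that $r\le 2^{n/4}$ makes all the accumulated Lipschitz-type factors $(1+Cn^{1/2}2^{-n/2})^r$ harmless, i.e.\ bounded by an absolute constant.

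Next I would handle the Euler trajectory itself. For each fixed $n$, each $r\le 2^{n/4}$, each $k$, and each dyadic $x_0\in Q$, apply Lemma \ref{nla} with a value of $\lambda$ of order $n^{1/2}$ (more precisely $\lambda=\lambda_0 n^{1/2}$ for a large constant $\lambda_0$), and with $p$ chosen large enough (say $p>8d+8$, to beat the number of dyadic $x_0$ at the relevant scale) that the resulting failure probability $C\lambda^{-p}$, summed over all $n$, all $r\le 2^{n/4}$, all $k\le 2^n$, and all dyadic $x_0\in Q$ at dyadic level $2^n$ (there are $O(2^{nd(2^n+1)}\cdot 2^n\cdot 2^{n/4})$ such choices, but this is where the $2^{-2^{n/2}}$ slack enters --- one restricts $x_0$ to a net of spacing $2^{-2^{n/2}}$, so the number of $x_0$ is only $2^{O(d\,2^{n/2})}$, and $\lambda^{-p}=(\lambda_0 n^{1/2})^{-p}$ times $e^{-cn}$-type gains are not enough on their own, so one instead uses the exponential tail hidden in Lemma \ref{nla} via Markov with $p$ growing, or simply takes $p$ fixed and absorbs the polynomial count against a geometric factor). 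After Borel--Cantelli, with probability $1$ there is $C$ such that for all such $n,r,k$ and all $x_0$ in the net, $\sum_{q=1}^r|\rho_{n,k+q}(x_{q-1},x_q)|\le C(2^{-3n/4}|x_0|+2^{-2^{n/2}})$; a general dyadic $x_0\in Q$ is within $2^{-2^{n/2}}$ of the net, and (\ref{eq7}) again absorbs the discrepancy into the $2^{-2^{n/2}}$ term.

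Combining the two steps gives (\ref{eqx}): the Euler part contributes $2^{-3n/4}|y_0|+2^{-2^{n/2}}$, and the correction $\sum|\rho(y)-\rho(x)|$ together with the bound on $|y_q-x_q|$ contributes $2^{-n/4}\sum_{q=0}^{r-1}|\gamma_q|$ (the loss of a factor $2^{n/4}$ relative to $2^{-n/2}$ per step is exactly what accumulating over $r\le 2^{n/4}$ steps costs, and why the hypothesis is $r\le 2^{n/4}$ here rather than $r\le 2^{n/2}$ as in Lemma \ref{nla}). The main obstacle is getting the counting in the Borel--Cantelli step to close: one must choose $p$ and the $\lambda$-scaling so that the union over the (exponentially-in-$2^{n/2}$ many) base points $x_0$ still has summable complementary probability, which is precisely why the harmless-looking additive term $2^{-2^{n/2}}$ is built into the statement --- it lets us coarsen the net for $x_0$ to spacing $2^{-2^{n/2}}$. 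A secondary technical point is verifying that the Lipschitz-type constants coming from (\ref{eq7}), when compounded over $r$ steps, remain bounded; this uses $r\le 2^{n/4}$ and $\log^+(1/|x-y|)\lesssim 2^{n/2}$ for the separations $|x-y|\gtrsim 2^{-2^{n/2}}$ that actually occur.
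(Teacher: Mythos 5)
Your overall strategy matches the paper's: compare $(y_q)$ to the Euler trajectory $(x_q)$ defined by $x_{q+1}=x_q+\sigma_{n,k+q}(x_q)$, control the discrepancy via the $\gamma_q$ and the pathwise near-Lipschitz bound from Lemma~\ref{l1}, then handle the Euler trajectory by applying Lemma~\ref{nla} on a finite net of starting points $x_0$ and invoking Borel--Cantelli. Where your argument breaks down is the counting in the Borel--Cantelli step, and your own text flags the problem without resolving it.

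If you discretize $Q$ at uniform spacing $2^{-2^{n/2}}$ (which is what the $2^{-2^{n/2}}$ error term naively suggests), the number of net points at level $n$ is of order $2^{d\,2^{n/2}}$, which is doubly exponential in $n/2$. Lemma~\ref{nla} gives only a polynomial tail $C\lambda^{-p}$ for each fixed $p$, and even replacing $\lambda$ by $\lambda\,2^{cn}$ yields only a gain $2^{-cpn}$ --- polynomial in $2^{-n}$. This cannot beat a $2^{d\,2^{n/2}}$ union bound. Your two proposed escapes do not work as stated: the constant in Lemma~\ref{nla} depends on $p$ (through Burkholder and through $C^p(p/2)!$ in (\ref{bp})), so you cannot simply let $p$ grow with $n$ without re-deriving everything with explicit $p$-dependence; and ``absorbing the polynomial count against a geometric factor'' is not an argument when the count is not polynomial.

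The paper's fix is a \emph{multi-scale} net: for each integer $s$ with $0\leq s<2^{n/2}$, discretize the cube $Q_s=\{\|x\|\leq 2^{-s}\}$ at spacing $2^{-s-n}$ (i.e.\ spacing proportional to the size of $x_0$, not absolutely small). Each $\Omega_{ns}$ then has $O((2^n)^d)$ points, so $\Omega_n=\bigcup_s\Omega_{ns}$ has only $O(2^{n/2}\cdot 2^{nd})$ points --- polynomial in $2^n$. For a general $y_0$ one picks $s$ so that $|y_0|\asymp 2^{-s}$ and a net point $x_0$ with $|x_0-y_0|\lesssim 2^{-n}|y_0|$; this relative (not absolute) smallness is what lets the discretization error be absorbed into the $2^{-3n/4}|y_0|$ term rather than needing to hide under $2^{-2^{n/2}}$. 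The role of $2^{-2^{n/2}}$ is only to cut off the range of $s$ (and to absorb the $\delta_n$ term from (\ref{eq7})), not to set the mesh width. With the polynomial count, a fixed $p=8(4+d)$ and the rescaling $\lambda\mapsto\lambda 2^{n/8}$ make the series $\sum_n 2^{n(3+d)}\lambda^{-p}2^{-pn/8}$ converge, and Borel--Cantelli closes.

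So: the decomposition and the use of Lemma~\ref{nla} are right, but you need to replace the uniform $2^{-2^{n/2}}$-net by the scale-adapted net $\Omega_n=\bigcup_{0\leq s<2^{n/2}}\Omega_{ns}$ before the probabilistic step can be made rigorous.
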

\begin{proof} Let $\delta_n=2^{-2^{n/2}}$. By Lemma \ref{l1}, with probability 1 there exists $C>0$ such that, for any $n,k\geq0$ and any
$x,y\in Q$, we have
\zz\label{nq5}\rho_{nk}(x,y)\leq C2^{-n/4}|x-y|+\delta_n\z
As before, let $Q_s=\{x\in\R^d:\|x\|\leq2^{-s}\}$. Then, for integers $s$ with $0\leq s<2^{n/2}$, let $\Omega_{ns}$ be a set of not more
than $(2^nd^{1/2})^d$ points of $Q_s$ such that every $x\in Q_s$ is within distance $2^{-s-n}$ of a point of $\Omega_{ns}$ and let
$\Omega_n=\cup_{0\leq s<2^{-n/2}}\Omega_{ns}$. Let $p=8(4+d)$. Then by Lemma \ref{nla} there is $C_1>0$ such that the probability that
\[\sum_{q=1}^r|\rho_{n,k+q}(x_{q-1},x_q)|\geq2^{-n}\left(C_1\sum_{q=0}^{r-1}|x_q|+\lambda2^{n/8}r^{1/2}|x_0|\right)\]
for some $n,r,k$ as in the statement and some $x_0\in\Omega_n$, is bounded above by $C_1\sum_{n=0}^\infty\lambda^{-p}2^{n(3+d)}2^{-pn/8}$
which approaches 0 as $\lambda\rightarrow\infty$. Hence with probability 1 there exists $C>0$ such that
\zz\label{nq6}\sum_{q=1}^r|\rho_{n,k+q}(x_{q-1},x_q)|<C2^{-n}\left(\sum_{q=0}^{r-1}|x_q|+2^{n/8}r^{1/2}|x_0|\right)\z
for all $n,k,r$ as above and $x_0\in\Omega_n$.

We now suppose, as we may with probability 1, that (\ref{nq5}) and (\ref{nq6}) hold (with the same $C$). We fix $n,k,r,y_0\cdots
y_r,\gamma_0\cdots\gamma_r$ as in the statement of the lemma. Take the smallest $s$ such that $y_0\in Q_s$, noting that then
$2^{-s-1}\leq|y_0|\leq d^{1/2}2^{-s}$. Then we find $x_0\in\Omega_{ns}$ with $|x_0-y_0|<2^{-s-n}\leq2^{1-n}|y_0|$ and define $x_1\cdots
x_r$ by the recurrence relation $x_{q+1}=x_q+\sigma_{n,k+q}(x_q)$. Then by (\ref{nq6})
\[\sum_{q=1}^r|\rho_{n,k+q}(x_{q-1},x_q)|<C2^{-n}\left(\sum_{q=0}^{r-1}|x_q|+2^{n/4}|x_0|\right)\]
Using (\ref{nq5}) we have $|x_{q+1}|=|x_q+\sigma_{n,k+q}(x_q)|\leq(1+C2^{-n/4})|x_q|+\delta_n$ so $|x_q|\leq C_1(|x_0|+r\delta_n)$ and
\zz\label{nq7}\sum_{q=1}^r|\rho_{n,k+q}(x_{q-1},x_q)|<C_22^{-3n/4}(|x_0|+2^{n/4}\delta_n)\z
Now let $u_q=x_q-y_q$. Then $|u_{q+1}-u_q|\leq|\rho_{n,k+q}(x_q,y_q)|+|\gamma_q|$ so
\[|u_{q+1}|\leq|u_q|(1+C2^{-n/4})+|\gamma_q|+\delta_n\]
and since $|u_0|\leq2^{1-n}|y_0|$ we deduce that $|u_q|\leq C_3(2^{-n}|y_0|+r\delta_n+\sum_{q=0}^{r-1}|\gamma_q|)$ and so
\zz\label{nq8}|\rho_{n,k+q}(x_q,y_q)|\leq C_42^{-n/4}\left(2^{-n}|y_0|+r\delta_n+\sum_{q=0}^{r-1}|\gamma_q|\right)\z
and we have the same bound for $|\rho_{n,k+q}(x_{q-1},y_{q-1})|$. Now
\[\rho_{n,k+q}(y_{q-1},y_q)=\rho_{n,k+q}(x_{q-1},x_q)+\rho_{n,k+q}(y_{q-1},x_{q-1})+\rho_{n,k+q}(x_q,y_q)\]
and then using (\ref{nq7}), (\ref{nq8}) and the fact that $|x_0-y_0|\leq2^{1-n}|y_0|$ we deduce that
\[\sum_{q=1}^r|\rho_{n,k+q}(y_{q-1},y_q|\leq C_5\left(2^{-3n/4}|y_0|+2^{-n/4}\sum_{q=0}^{r-1}|\gamma_j|+2^{-n/2}\delta_n\right)\]
from which the result follows.
\end{proof}

We now proceed to complete the proof of the theorem. From now on we take
$g=f$ in the definition of $\sigma_{nk}$ and $\rho_{nk}$. We consider a Brownian path $W$ satisfying the conclusions of Lemmas \ref{l1},
\ref{l3}, \ref{nlb} and \ref{l5} for some $C>0$. We shall show that for such a Brownian path the only solution $u$ of (\ref{eq4})
in $\Phi$ is $u=0$. This will follow from the following:

\begin{lemma}\label{ll} Suppose $W$ satisfies the conclusions of Lemmas \ref{l1},
\ref{l3}, \ref{nlb} and \ref{l5} for some $C>0$.
Then there are positive constants $K$ and $m_0$ such that, for all
integers $m>m_0$, if $u$ is a solution of (\ref{eq4}) in $\Phi$ and for some $j\in\{0,1,\cdots,2^m-1$ and some $\beta$ with
$2^{-2^{3m/4}}\leq\beta\leq2^{-2^{2m/3}}$ we have $|u(j2^{-m})|\leq\beta$, then
\[|u((j+1)2^{-m})|\leq\beta\{1+K2^{-m}\log(1/\beta)\}\]
\end{lemma}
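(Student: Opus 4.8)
\textbf{Proof plan for Lemma \ref{ll}.} The plan is to run the ``nearly-Lipschitz'' Gronwall argument sketched in the outline on the single interval $I_{mj}=[j2^{-m},(j+1)2^{-m}]$, using the decomposition \eqref{aa} to reduce everything to the $\rho_{nk}$-estimates of Lemmas \ref{l1}, \ref{l3} and \ref{nlb}. Fix a solution $u\in\Phi$ of \eqref{eq4} with $|u(j2^{-m})|\le\beta$ and, for $n\ge m$, let $u_n\in\Phi_n$ be the dyadic step-function approximation to $u$ that on each subinterval $I_{n,k}\subset I_{mj}$ takes a dyadic value within $2^{-n}$ of $u(k2^{-n})$; note $u_n\to u$ pointwise, so by Lemma \ref{l5} (applied with $g=f$) we may pass to the limit in
\[\int_{I_{mj}}\{f(W(t)+u(t))-f(W(t))\}dt=\int_{I_{mj}}\{f(W(t)+u_m(t))-f(W(t))\}dt+\sum_{n=m}^{\infty}\int_{I_{mj}}\{f(W(t)+u_{n+1}(t))-f(W(t)+u_n(t))\}dt.\]
The first term is a sum of (a single, since $u_m$ is constant on $I_{mj}$) $\sigma_{m,j}$-type term evaluated at a point of size $\le\beta+2^{-m}$, so by Lemma \ref{l3} it is bounded by $Cm^{1/2}2^{-m/2}(\beta+2^{-2^m})$; since $\beta\ge2^{-2^{3m/4}}\gg2^{-2^m}$ this is $\lesssim m^{1/2}2^{-m/2}\beta$.

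The main work is the tail sum. For each $n\ge m$, the difference $f(W(t)+u_{n+1}(t))-f(W(t)+u_n(t))$ integrated over $I_{mj}$ breaks into $2^{n-m}$ pieces of the form $\rho_{n+1,k+q}(y_{q-1},y_q)$ where $y_q=u_n$ or $u_{n+1}$ at the relevant dyadic level; crucially $|y_q-y_{q-1}|$ and the ``Euler defect'' $\gamma_q=y_{q+1}-y_q-\sigma_{n+1,k+q}(y_q)$ are controlled because $u$ solves \eqref{eq4} (so $u$ genuinely satisfies $u((k+1)2^{-n})-u(k2^{-n})=\sigma_{n,k}(u(k2^{-n}))+\text{error}$, the error coming only from the mismatch between $u$ and its step approximation, hence of size $\lesssim 2^{-n}$ times a modulus). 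Applying Lemma \ref{nlb} on each run of $r=2^{n-m}\le2^{n/4}$ consecutive subintervals (this requires $n-m\le n/4$, i.e.\ $m\ge 3n/4$; for the finitely many initial $n<\tfrac43 m$ one instead subdivides $I_{mj}$ into $O(1)$ blocks of length $2^{-\lceil 3n/4\rceil}$ and sums, or simply uses Lemma \ref{l1} termwise, losing only a constant) yields a bound of the shape $C\big(2^{-3n/4}\sup|u|+2^{-n/4}\cdot 2^{n-m}\cdot(\text{defect})+2^{-2^{n/2}}\big)$. Here $\sup_{I_{mj}}|u|\le\beta+2^{-m}$ by the trivial bound $|f|\le1$ and $|I_{mj}|=2^{-m}$, and the defect terms sum to something $\lesssim 2^{-m}\cdot\sup|u|$. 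Summing the geometric-type series over $n\ge m$ gives a tail bound $\lesssim 2^{-m}m^{1/2}\beta$ (the $2^{-2^{n/2}}$ terms sum to $\lesssim 2^{-2^{m/2}}\ll 2^{-m}\beta$ since $\beta\ge2^{-2^{3m/4}}$).

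Combining, $\big|\int_{I_{mj}}\{f(W(t)+u(t))-f(W(t))\}dt\big|\le C'm^{1/2}2^{-m/2}\beta$ — but this is not yet the claimed bound, and here is where the genuine Gronwall/bootstrap step enters: the estimate must be run not on the whole of $I_{mj}$ at once but incrementally, comparing $|u(t)|$ to its value at $j2^{-m}$ and absorbing the resulting increase, exactly as in the elementary Lipschitz case in the outline (where $\alpha'=\beta+L|I|\alpha$). Concretely, for $t\in I_{mj}$ write $u(t)-u(j2^{-m})=\int_{j2^{-m}}^{t}\{f(W(s)+u(s))-f(W(s))\}ds$ plus the $\sigma$-term $\int_{j2^{-m}}^t f(W(s))\,ds$ contribution already folded into the above; applying the same decomposition-and-$\rho_{nk}$-bound machinery to the subinterval $[j2^{-m},t]$ shows the increment is bounded by $C'2^{-m}\log(1/\beta)\cdot\sup_{[j2^{-m},t]}|u|$ (the $\log(1/\beta)$ replacing the crude $m^{1/2}2^{m/2}$ because $\log(1/\beta)\ge 2^{2m/3}\gg m$, so it dominates all the $m^{1/2}$-type and geometric losses and lets us write everything as a single clean constant times $2^{-m}\log(1/\beta)$). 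Then $\alpha:=\sup_{I_{mj}}|u|$ satisfies $\alpha\le\beta+K2^{-m}\log(1/\beta)\,\alpha$, and for $m>m_0$ large enough $K2^{-m}\log(1/\beta)\le K2^{-m}2^{2m/3}=K2^{-m/3}<\tfrac12$, giving $\alpha\le\beta(1-K2^{-m}\log(1/\beta))^{-1}\le\beta\{1+2K2^{-m}\log(1/\beta)\}$, which is the asserted inequality (after renaming $2K$ as $K$).

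\textbf{Main obstacle.} The delicate point is bookkeeping the ``Euler defects'' $\gamma_q$ in the tail sum: one must check that when $u$ is a true solution of \eqref{eq4}, the quantities $y_{q+1}-y_q-\sigma_{n,k+q}(y_q)$ arising from the step approximants $u_n,u_{n+1}$ are genuinely of size $O(2^{-n}\cdot\text{modulus of }u)$ and not merely $O(2^{-n})$ — otherwise the $2^{-n/4}\sum_q|\gamma_q|$ term in Lemma \ref{nlb} would contribute $O(2^{-m}\cdot 2^{n-m}\cdot 2^{-n})=O(2^{-2m})$-free-of-$\beta$ junk rather than a term proportional to $\sup|u|\lesssim\beta$, and the bootstrap would collapse. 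Making this self-referential estimate (the bound on the defect involves $u$ itself) precise, and verifying that the constants from the countably many applications of Lemma \ref{nlb} across all $n\ge m$ can be taken uniform (as the lemma's ``with probability 1 there exists $C$'' formulation permits), is the heart of the argument; everything else is the routine Lipschitz-Gronwall template from the outline.
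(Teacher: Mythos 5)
There is a genuine gap, and it sits precisely in the step you call ``the genuine Gronwall/bootstrap step.'' Your proposed bootstrap quantity is $\alpha:=\sup_{I_{mj}}|u|$, and you claim the increment estimate yields $\alpha\le\beta+K2^{-m}\log(1/\beta)\,\alpha$. But the tail terms $\Omega_l$ coming from \eqref{aa} require, when one applies the almost-sure bound \eqref{eq7} termwise, control of the \emph{summed dyadic increments} $\sum_k|u((k+1)2^{-l})-u(k2^{-l})|$ over the $2^{l-m}$ level-$l$ subintervals of $I_{mj}$, for every $l$ between $m$ and $N\approx4\log_2(1/\beta)$. This total-variation-type quantity is not controlled by $\sup|u|$: the only a priori bound (from $u\in\Phi$, i.e.\ $|u'|\le1$) is $2^{-m}$, which is astronomically larger than $\beta\le2^{-2^{2m/3}}$. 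Plugging the trivial $2^{-m}$ bound into the crude estimate gives $\sum_l\Omega_l\lesssim N^{1/2}2^{-3m/2}$, which swamps the target $\beta2^{-m}\log(1/\beta)$ by a factor of roughly $\beta^{-1}2^{-m/2}$ — so a sup-norm bootstrap cannot close. The paper instead bootstraps on a different quantity $\alpha$, defined in \eqref{eq10} as the smallest constant such that $\sum_k|u((k+1)2^{-n})-u(k2^{-n})|\le\alpha2^{-m}(n^{1/2}2^{n/2}+N)$ holds \emph{simultaneously for all} $n\in[m,N]$; this is a multi-scale total-variation control for $u$, not an $L^\infty$ bound, and it is exactly the hypothesis that makes both the crude $\Omega_l$ bound \eqref{sl}--\eqref{b3} and the subsequent bound on $\sum|\gamma_{nk}|$ in \eqref{eqg} close self-consistently.

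Your ``main obstacle'' discussion comes close to the real issue but misdiagnoses it. You want the Euler defects $\gamma_q$ to each be $O(2^{-n}\cdot\text{modulus of }u)$ so that the defect contribution is $\lesssim2^{-m}\sup|u|$. The paper makes no such claim; instead the defects at level $n$, summed over $k$, equal $\sum_{l\ge n}\Omega_l$ (see \eqref{zu}--\eqref{eqg}), which is first bounded \emph{crudely} in terms of $\alpha$ by \eqref{b3} as $\lesssim N^{1/2}2^{-m-N/2}+\alpha2^{-m}N^2$ — already larger than $\beta2^{-m}$ by a factor $N^2$ — and this crude, $\alpha$-dependent bound is then fed into Lemma~\ref{nlb} to obtain the refined bound on $\Omega_n$ for $N^{1/6}\le n\le N$. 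The improvement works because the refined bound regains a power of $N$, so after plugging into \eqref{eq12} and comparing with \eqref{eq10} the coefficient of $\alpha$ drops below $1/2$ for $m>m_0$, and the minimality of $\alpha$ then forces $\alpha\le2C_{15}\beta$. Only then does \eqref{eq10} at $n=m$ yield the endpoint estimate $|u((j+1)2^{-m})|\le\beta(1+C_{16}N2^{-m})$, which is the desired conclusion. In short: the right object to bootstrap is a normalized multi-scale total-variation functional, and the $\gamma$-control is a two-pass self-referential estimate, not a per-term smallness claim. Without these two structural ingredients your outline does not close.
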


\begin{proof} We use $C_1,C_2,\cdots$ for positive constants which depend
only on the constant $C$ and the dimension $d$. Fix $m$, $j$ and $\beta$ as in the statement, and suppose $|u(j2^{-m})|\leq\beta$. Let
$N$ be the integer part of $4\log_2(1/\beta)$. Suppose $u\in\Phi$ satisfies (\ref
{eq4}), and let $u_n$ be the step function which takes the constant value
$u(k2^{-n})$ on the interval $I_{nk}$, for $k=0,1,\cdots,2^n-1$.

Let $\alpha$ be the smallest nonnegative number such that
\zz\label{eq10}\sum_{k=j2^{n-m}}^{(j+1)2^{n-m}-1}|u((k+1)2^{-n})-u(k2^{-n})|
\leq\alpha2^{-m}(n^{1/2}2^{n/2}+N)\end{equation}
for all $n$ with $m\leq n\leq N$.

For $n\geq m$ let
\zz\label{ps}\psi_n=\sum_{k=j2^{n-m}}^{(j+1)2^{n-m}-1}|u(k2^{-n})|\end{equation}
Then by (\ref{eq10})
\[\psi_n\leq2\psi_{n-1}+\alpha2^{-m}(n^{1/2}2^{n/2}+N)\]
for $n>m$,and since $\psi_m=\beta$ it follows that
\zz\label{eq11}\psi_n\leq2^{n-m}\beta+\sum_{l=m+1}^n\alpha2^{n-l-m}(l^{1/2}2^{l/2}+N)\leq C_12^{n-m}(\beta+\alpha2^{-m}N)\end{equation}
for all $n$ with $m\leq n\leq N$, where we have used the fact that $m^{1/2}2^{m/2}$ is bounded by const.$N$.

Now fix $n\geq m$. Then for $k=j2^{n-m},\cdots,(j+1)2^{n-m}-1$ we have,
using (\ref{eq8})
\[\begin{split}&u((k+1)2^{-n})-u(k2^{-n})=\int_{I_{kn}}\{f(W(t)+u(t))-
f(W(t))\}dt\\&=\int_{I_{kn}}\{f(W(t)+u_n(t))-f(W(t))\}dt+\sum_{l=n}^\infty
\int_{I_{kn}}\{f(W(t)+u_{l+1}(t))-f(W(t)+u_l(t))\}dt\end{split}\]
which we can write as
\zz\label{zu}u((k+1)2^{-n})-u(k2^{-n})=\sigma_{nk}(u(k2^{-n}))+\sum_{l=n}^\infty\sum_{r=k2^{l-n}}^{(k+1)2^{l-n}-1}
\rho_{l+1,2r+1}(u(2^{-l-1}(2r+1)),u(2^{-l}r))\z
from which we deduce
\zz\label{eq12}\sum_{k=j2^{n-m}}^{(j+1)2^{n-m}-1}|u((k+1)2^{-n})-u(k2^{-n})|\leq
\sum_{k=j2^{n-m}}^{(j+1)2^{n-m}-1}|\sigma_{nk}(u(k2^{-n}))|+\sum_{l=n}
^\infty\Omega_l\end{equation}
where $\Omega_l=\sum_{r=j2^{l-m}}^{(j+1)2^{l-m}-1}|\rho_{l+1,2r+1}(u(2^{-l-1}(2r+1)),u(2^{-l}r))|$.

We now proceed to estimate the two sums on the right of (\ref{eq12}), starting with the easier $\sigma_{nk}$ term. Using Lemma \ref{l3}
and the fact that $N<2^m$, we have $|\sigma_{nk}(x)|\leq C_2n^{1/2}2^{-n/2}(2^{-N}+|x|)$ and so
\zz\label{b1}\begin{split}\sum_{k=j2^{n-m}}^{(j+1)2^{n-m}-1}|\sigma_{nk}(u(k2^{-n}))|&\leq
C_2\sum_{k=j2^{n-m}}^{(j+1)2^{n-m}-1}n^{1/2}2^{-n/2}(2^{-N}+|u(k2^{-n})|)\\
&\leq C_3n^{1/2}2^{n/2-m}(\beta+2^{-m}N\alpha+2^{-N})\end{split}\z
using (\ref{eq11}).

Next we bound $\sum\Omega_l$, which we do in two stages. We first obtain a relatively crude bound by applying (\ref{eq7}) to each term,
and then obtain an improved by applying the crude bound together with Lemma (\ref{nlb}). To start with the crude bound, from (\ref{eq7})
we have $|\rho_{nk}(x,y)|\leq C_32^{-n/2}N^{1/2}(2^{-N}+|x-y|)$ and using this together with (\ref{eq10}) gives
\zz\label{sl}\Omega_l\leq C_42^{-l/2}N^{1/2}\{2^{-N}2^{l-m}+\alpha2^{-m}(l^{1/2}2^{l/2}+N)\}\z
and so
\zz\label{b2}\sum_{l=m}^N\Omega_l\leq C_5(N^{1/2}2^{-m-N/2}+\alpha2^{-m}N^2)\z
For $l>N$ we use $|u(t)-u(t')|\leq|t-t'|$ and (\ref{eq7}) to obtain
\zz\label{inf}\sum_{l=N+1}^\infty\Omega_l\leq\sum_{l=N+1}^\infty C_62^{l-m}l^{1/2}2^{-3l/2}\leq C_7N^{1/2}2^{-m-N/2}\z
and combining this with (\ref{b2}) we obtain
\zz\label{b3}\sum_{l=m}^\infty\Omega_l\leq C_8(N^{1/2}2^{-m-N/2}+\alpha2^{-m}N^2)\z

The second stage is to improve the estimate (\ref{b3}) by applying Lemma \ref{nlb} to obtain a better estimate for $\Omega_n$
for larger $n$; we use (\ref{b3}) to bound the $\gamma$ term in Lemma \ref{nlb}.

Let $N^{1/6}\leq n\leq N$. We define $\gamma_{nk}=u((k+1)2^{-n})-u(k2^{-n})-\sigma_{nk}(u(k2^{-n}))$, noting that (\ref{zu}) implies that
\zz\label{eqg}\sum_{k=j2^{n-m}}^{(j+1)2^{n-m}-1}|\gamma_{nk}|\leq\sum_{l=n}^\infty\Omega_l\leq C_8(N^{1/2}2^{-m-N/2}+\alpha2^{-m}N^2)\z
 Also we define
\[\Lambda_n=\sum_{k=j2^{n-m}}^{(j+1)2^{n-m}-1}|\rho_{n,k+1}(2^{-n}k,2^{-n}(k+1))\]
so that $\Omega_n\leq\Lambda_{n+1}$. Let $r=\lfloor2^{n/4}\rfloor$. In order to apply Lemma \ref{nlb} to estimate $\Lambda_n$, we will
split the sum into $r$-sized pieces. First we find $i\in\{0,1,\cdots,r-1\}$ such that, writing $s=\lfloor r^{-1}(2^{n-m}-i)\rfloor$, we
have $\sum_{t=0}^s|u(j2^{-m}+(i+tr)2^{-n})|\leq r^{-1}\psi_n$. Now we fix for the moment $t\in\{0,1,\cdots,s\}$ and apply Lemma
\ref{nlb} with $y_q=u((k+q)2^{-n})$ where $k=j2^{n-m}+i+tr$. We obtain
\[\sum_{q=1}^r|\rho_{n,k+q}(y_{q-1},y_q)|\leq C_9\left(2^{-3n/4}|u(k2^{-n})|+2^{-n/4}\sum_{q=0}^{r-1}|\gamma_{n,k+q}|
+2^{-2^{n/2}}\right)\]
 Summing over $t$ then gives
 \[\begin{split}\sum_{k=j2^{n-m}+i}^{(j+1)2^{n-m}-1}|\rho_{n,k+1}(2^{-n}k,2^{-n}(k+1))|\leq&C_92^{-3n/4}\sum_{t=0}^s
 |u(j2^{-m}+(i+tr)2^{-n})|\\&+C_9\left(+2^{-n/4}\sum_{k=j2^{n-m}+i}^{(j+1)2^{n-m}-1}|\gamma_{n,k}|+2^{n-2^{n/2}}\right)\end{split}\]
Also
\[\sum_{k=j2^{n-m}}^{j2^{n-m}+i-1}|\rho_{n,k+1}(2^{-n},2^{-n}(k+1))|\leq C_9\left(2^{-3n/4}|u(j2^{-m})|+2^{-n/4}\sum_{k=j2^{n-m}}^{j2^{n-m}
+i-1}|\gamma_{n,k}|+r2^{-2^{n/2}}\right)\]
From the last two inequalities, using (\ref{eq11}), (\ref{eqg}) and $|u(j2^{-m})|\leq\beta$, we find that
\[\Lambda_n\leq C_{10}\{2^{-m}(\beta+\alpha2^{-m}N)+2^{-m-n/4}(N^{1/2}2^{-N/2}+\alpha N^2)+2^{n-2^{n/2}}\}\]
Since $n\geq N^{1/6}$ the first term dominates so $\Lambda_n\leq C_{11}2^{-m}(\beta+\alpha2^{-m}N)$, and the same
bound holds for $\Omega_n\leq\Lambda_{n+1}$. We deduce that
\[\sum_{N^{1/6}\leq l\leq N}\Omega_l\leq C_{12}N2^{-m}(\beta+\alpha N2^{-m})\]
Using the original bound (\ref{sl}) for $l<N^{1/6}$ we have
\[\sum_{m\leq l<N^{1/6}}\Omega_l\leq C_{13}N^{1/2}\{2^{-N+N^{1/4}/2-m}+\alpha2^{-m}(N^{1/4}+2^{-m/2}N)\}\]
Combining these two estimates with (\ref{inf}) we get our improved bound.
\[\sum_{l=m}^\infty\Omega_l\leq C_{14}\{N2^{-m}(\beta+\alpha N2^{-m})+\alpha(2^{-m}N^{3/4}+2^{-3m/2}N^{3/2})\}\]
To conclude the proof we use this bound along with (\ref{b1}) in (\ref{eq12}) and obtain
\[\begin{split}\sum_{k=j2^{n-m}}^{(j+1)2^{n-m}-1}|u((k+1)2^{-n})-u(k2^{-n})|\leq&C_{15}(n^{1/2}2^{n/2-m}+N2^{-m})\\
&\times\{\beta+\alpha(N2^{-m}+N^{-1/4}+2^{-m/2}N^{1/2})\}\end{split}\]
for all $n$ with $m\leq n\leq N$. Comparing this with (\ref{eq10}) we see by the minimality of $\alpha$ that
\[\alpha\leq C_{15}\{\beta+\alpha(N2^{-m}+N^{-1/4}+2^{-m/2}N^{1/2})\}\]
Then if $m$ is large enough to ensure $C_{15}(N2^{-m}+N^{-1/4}+2^{-m/2}N^{1/2})<1/2$ it follows that $\alpha\leq2C_{15}\beta$. Then
applying (\ref{eq10}) with $n=m$ gives $|u((j+1)2^{-m})|\leq\beta+2C_{15}\beta(m^{1/2}2^{m/2}+N)2^{-m}\leq\beta(1+C_{16}N2^{-m})$
from which the required result follows.
\end{proof}

To complete the proof of Theorem \ref{mth}, using the notation of Lemma \ref{ll} let $m>m_0$ and $\beta_0=2^{-2^{3m/4}}$, and define
$\beta_j$ for $j=1,2,\cdots,2^m$ by the recurrence relation $\beta_{j+1}=\beta_j(1+K2^{-m}\log(1/\beta_j))$. Writing $\gamma_j
=\log(1/\beta_j)$ we then have
\[\gamma_{j+1}=\gamma_j-\log(1+K2^{-m}\gamma_j)\geq\gamma_j(1-K2^{-m})\]
so the sequence $(\gamma_j)$ is decreasing and
\[\gamma_j\geq\gamma_0(1-K2^{-m})^j\geq\gamma_0e^{-K-1}=2^{3m/4}e^{-K-1}\geq2^{2m/3}\]
for all $j=1,2,\cdots,2^m$, provided $m$ is large enough. Then for each $j$, $\beta_j$ is in the range specified in Lemma \ref{ll}, and
it follows from that lemma by induction on $j$ that $|u(j2^{-m})|\leq\beta_j$ for each $j$. Hence $|u(j2^{-m})|\leq2^{-2^{2m/3}}$ for
each $j$. This holds for all large enough $m$, and hence $u$ vanishes at all dyadic points in [0,1], and, as $u$ is continuous, $u=0$ on
[0,1]. This completes the proof of the theorem.

\section{An Application}\label{app}

We give an application of Theorem \ref{mth} to
convergence of Euler approximations to (\ref{eq1}) with variable step size.

In this section we assume $f$ is continuous and consider (\ref{eq1}) on a
bounded interval $[0,T]$. Given a partition
${\mathcal P}=\{0=t_0<t_1<\cdots<t_N=T\}$ of $[0,T]$ we consider the Euler
approximation to (\ref{eq1}) given by:
\[x_{n+1}=x_n+W(t_{n+1})-W(t_n)+(t_{n+1}-t_n)f(t_n,x_n)\]
for $n=0,\cdots,N-1$, with $x_0=0$. For such a partition ${\mathcal P}$ we
let $\delta({\mathcal P})=\max_{n=1}^N(t_{n}-t_{n-1})$. Then we have the
following:

\begin{cor}\label{cor}For almost every Brownian path $W$, for
any sequence\[
{\mathcal P}_k=\{t_0^{(k)},\cdots,t_{N_k}^{(k)}\}\]
of partitions with $\delta({\mathcal P}_k)\rightarrow0$, we have
\[\max_{n=1}^{N_k}|x_n^{(k)}-x(t_n^{(k)})|\rightarrow0\]
as $k\rightarrow\infty$, where $x(t)$ is the unique solution of
(\ref{eq1}) and $\{x_n^{(k)}\}$ is the Euler approximation using the partition
${\mathcal P}_k$.
\end{cor}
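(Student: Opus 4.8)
The plan is to leverage Theorem \ref{mth} together with a compactness argument. Fix a Brownian path $W$ for which Theorem \ref{mth} holds (so \eqref{eq1} has a unique continuous solution $x$ on $[0,T]$) and which in addition satisfies the modulus-of-continuity estimates that make the Girsanov reduction and the passage-to-the-limit lemmas apply; these form a set of full measure. I would argue by contradiction: suppose the conclusion fails, so there is a sequence of partitions ${\mathcal P}_k$ with $\delta({\mathcal P}_k)\to0$ but $\max_n|x_n^{(k)}-x(t_n^{(k)})|\geq\E_0>0$ along a subsequence. The first step is to define, for each $k$, the piecewise-linear (or piecewise-constant-drift) interpolant $\tilde x_k:[0,T]\to\R^d$ of the Euler points, i.e. the function which on $[t_n^{(k)},t_{n+1}^{(k)}]$ equals $x_n^{(k)}+W(t)-W(t_n^{(k)})+(t-t_n^{(k)})f(t_n^{(k)},x_n^{(k)})$; then $\tilde x_k(t_n^{(k)})=x_n^{(k)}$ and
\[\tilde x_k(t)=W(t)+\int_0^t f(\tau_k(s),\tilde x_k(\tau_k(s)))\,ds,\]
where $\tau_k(s)$ is the left endpoint of the partition interval containing $s$.

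The second step is equicontinuity and uniform boundedness: since $|f|\leq1$ (after the scaling reduction), the functions $u_k(t):=\tilde x_k(t)-W(t)$ are uniformly Lipschitz with constant $1$, hence by Arzel\`a--Ascoli there is a subsequence along which $u_k\to u$ uniformly on $[0,T]$ for some Lipschitz $u$; equivalently $\tilde x_k\to x^*:=W+u$ uniformly. The third step is to identify the limit as a solution of \eqref{eq1}. For this I would pass to the limit in the integral equation above: $\tau_k(s)\to s$ uniformly and $\tilde x_k(\tau_k(s))\to x^*(s)$ uniformly, and I need
\[\int_0^t f(\tau_k(s),\tilde x_k(\tau_k(s)))\,ds\ \longrightarrow\ \int_0^t f(s,x^*(s))\,ds.\]
When $f$ is continuous this is immediate by dominated convergence, since $f(\tau_k(s),\tilde x_k(\tau_k(s)))\to f(s,x^*(s))$ pointwise and $|f|\leq1$. (This is exactly why the hypothesis ``$f$ continuous'' is imposed in this section; for merely Borel $f$ one would instead invoke Lemma \ref{l5}, but continuity makes it routine.) Hence $x^*$ satisfies \eqref{eq1}, and by the uniqueness given by Theorem \ref{mth}, $x^*=x$. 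Therefore $\tilde x_k\to x$ uniformly along the subsequence, so in particular $\max_n|x_n^{(k)}-x(t_n^{(k)})|=\max_n|\tilde x_k(t_n^{(k)})-x(t_n^{(k)})|\leq\|\tilde x_k-x\|_\infty\to0$, contradicting the assumption $\E_0>0$. Since every subsequence has a further subsequence along which the maximum tends to $0$, the full sequence converges.

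The main obstacle is really just the justification of the limit $\int_0^t f(\tau_k(s),\tilde x_k(\tau_k(s)))\,ds\to\int_0^t f(s,x^*(s))\,ds$ together with the fact that the limit equation has the \emph{globally} unique solution $x$; the latter is where Theorem \ref{mth} does the essential work (a priori the limit $x^*$ need not be the strong solution, but Theorem \ref{mth} rules out any competitor). With $f$ continuous the analytic input is light, so the argument is short; the only mild care needed is the standard reduction $[0,T]\to[0,1]$ by scaling and the observation that the full-measure event on which Theorem \ref{mth} holds is the same event we work on. I do not expect any genuine difficulty beyond bookkeeping.
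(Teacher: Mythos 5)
Your proposal is correct and follows essentially the same route as the paper: contradiction, Arzel\`a--Ascoli applied to the $u$-functions (Euler points minus the Brownian path, which are uniformly Lipschitz because $|f|$ is bounded), passage to the limit in the integral form of the Euler scheme using continuity of $f$, and then uniqueness from Theorem \ref{mth}. The paper phrases the final contradiction as ``the limit $y$ is a solution of (\ref{eq1}) distinct from $x$,'' while you phrase it as ``the limit $x^*$ must equal $x$, forcing the Euler errors to vanish''; these are logically the same step, and your explicit interpolant and integral identity merely spell out details the paper leaves implicit.
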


\begin{proof} Suppose $W$ is a path for which the conclusion of Theorem
\ref{mth} holds, and suppose there is a sequence of partitions with
$\delta({\mathcal P}_k)\rightarrow0$ such that
$\max_{n=1}^{N_k}|x_n^{(k)}-x(t_n^{(k)})|\geq\delta>0$. Then if we let
$u_n^{(k)}=x_n^{(k)}-W(t_n^{(k)})$ we have $|u_{n+1}^{(k)}-u_n^{(k)}|
\leq\|f\|_\infty(t_{n+1}^{(k)}-t_n^{(k)})$ so by Ascoli-Arzela, after
passing to a subsequence we have a continuous $u$ on $[0,T]$ such that 
$\max_{n=1}^{N_k}|u_n^{(k)}-u(t_n^{(k)})|\rightarrow0$. Then writing
$y(t)=u(t)+W(t)$ we see that $y\neq x$ and, using the continuity of $f$,
that $y$ satisfies (\ref{eq1}), contradicting the conclusion of the
theorem. Corollary \ref{cor} is proved.
\end{proof}

The point of Corollary \ref{cor} is that the partitions can be chosen
arbitrarily, no `non-anticipating' condition is required. For  general
SDE's with non-additive noise and sufficiently smooth coefficients Euler
approximations will converge to the solution provided the partition
points $t_n$ are stopping times, but this condition is rather
restrictive for numerical practice, and an example is given in section
4.1 of \cite{gl} of a natural variable step-size Euler scheme for a simple SDE
which converges to the wrong limit. \cite{gl} also contains related results
and discussion.\vspace{.2cm}

{\bf Acknowledgement.} The author is grateful to Istvan Gy\"{o}ngy for
drawing his attention to Krylov's question and for valuable discussions.

\bibliographystyle{amsplain}

\begin{thebibliography}{99}

\bibitem{gl} J. G. Gaines and T. J. Lyons, Variable step size
control in the numerical solution of stochastic differential equations,
{\em SIAM J. Appl. Math.} {\bf57} (1997), 1455-1484.

\bibitem{ig} I. Gy\"{o}ngy, personal communication.

\bibitem{iw} N. Ikeda and S. Watanabe, {\em Stochastic Differential
Equations and Diffusion Processes}, North Holland, 1989.

\bibitem{ver} A. Yu. Veretennikov, On strong solutions and explicit
formulas for solutions of stochastic integral equations (Russian), {\em Mat.
Sbornik (N.S.)} {\bf111(153)} (1980), 434-452, 480. English transl. in
{\em Math. USSR Sb.} {\bf39} (1981), 387-403.
\end{thebibliography}

\end{document}